\newtheorem{thm}{Theorem}[section]
\newtheorem{exa}[thm]{Example}
\newtheorem{lem}[thm]{Lemma}
\newtheorem{conj}[thm]{Conjecture}
\newtheorem{prob}[thm]{Problem}
\newcommand{\Z}{\mathbb{Z}}
\newcommand{\F}{\mathbb{F}}
\newcommand{\N}{\mathbb{N}}
\newcommand{\len}{\mathop{\rm len}\nolimits}
\def\imod#1{\allowbreak\mkern10mu({\operator@font mod}\,\,#1)}
\begin{document}

\title{Distinct Partial Sums in Cyclic Groups: Polynomial Method and Constructive Approaches}

\author{Jacob~Hicks$^{1}$, M.~A.~Ollis$^{2}$ and John.~R.~Schmitt$^3$  \\
              \\
              {\it ${}^1$Department of Mathematics, University of Georgia,} \\    
              {\it  Athens, GA 30602, USA.} 
             \\
             \\
              {\it ${}^2$Marlboro College, P.O.~Box A, Marlboro,} \\    
              {\it Vermont 05344, USA.}             
              \\
              \\
              {\it ${}^3$Department of Mathematics, Middlebury College,} \\
               {\it Middlebury, VT 05753, USA. }  }

%\date{}

\maketitle

\begin{abstract}
Let~$(G,+)$ be an abelian group and consider a subset~$A \subseteq G$ with~$|A|=k$.  Given an ordering~$(a_1, \ldots, a_k)$ of the elements of~$A$, define its {\em partial sums} by~$s_0 = 0$ and $s_j = \sum_{i=1}^j a_i$ for~$1 \leq j \leq k$.
We consider the following conjecture of Alspach:  For any cyclic group~$\Z_n$ and any subset~$A  \subseteq \Z_n \setminus \{0\}$ with~$s_k \neq 0$,  it is possible to find an ordering of the elements of~$A$ such that no two of its partial sums~$s_i$ and~$s_j$ are equal for~$0 \leq i < j \leq k$.  We show that Alspach's Conjecture holds for prime~$n$  when~$k \geq n-3$ and when~$k \leq 10$.  The former result is by direct construction, the latter is non-constructive and uses the polynomial method.  We also use the polynomial method to show that for prime~$n$ a sequence of length $k$ having distinct partial sums exists in any subset of $\Z_n \setminus \{0\}$ of size at least $2k- \sqrt{8k}$ in all but at most a bounded number of cases.
\end{abstract}

\section{Introduction}\label{sec:intro}

We consider some simply stated problems and conjectures arising from the study of combinatorial designs.  These problems may be broadly described as follows: a finite subset of the elements of some group is given and one wishes to order the elements of this finite subset so that the sequence of partial sums has terms that are distinct.  Within this setting there are a plethora of questions that one might consider; these arise as one varies the group, places restrictions on the elements of the subset chosen, or imposes additional restrictions upon the sequence of partial sums beyond the terms being distinct.  It is surprising to us how recent these questions are and how relatively few results have been obtained thus far.

To be specific, let~$(G,+)$ be an abelian group and consider a subset~$A \subseteq G$ with~$|A|=k$.  Given an ordering~$(a_1, \ldots, a_k)$ of the elements of~$A$, define its {\em partial sums} by~$s_0 = 0$ and $s_j = \sum_{i=1}^j a_i$ for~$1 \leq j \leq k$.  

We will consider the following conjecture attributed to Alspach.

\begin{conj}\label{conj:alspach} {\rm (Alspach, see \cite{BH05})}
For any cyclic group~$\Z_n$ and any subset~$A  \subseteq \Z_n \setminus \{0\}$ with~$s_k \neq 0$,  it is possible to find an ordering of the elements of~$A$ such that no two of its partial sums~$s_i$ and~$s_j$ are equal for~$0 \leq i < j \leq k$.
\end{conj}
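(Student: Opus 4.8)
A full proof seems out of reach; the realistic plan is to establish Alspach's conjecture in two extreme regimes of~$k$ and to prove a density version, leaving the middle range open. Throughout I work with prime~$n$, so that $\Z_n=\F_n$ is a field. When~$k$ is near its maximum~$n-2$, the set~$A$ omits at most two nonzero elements and an ordering can be written down explicitly. When~$k$ is small, $A$ is unconstrained but~$k$ is small enough that one may hope to settle a single coefficient computation by machine. These two ranges will overlap only for tiny~$n$.

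For small~$k$ I would invoke the Combinatorial Nullstellensatz. Let variables $x_1,\dots,x_k$ range over~$A$ and set
\[
P_k(x_1,\dots,x_k)=\Bigl(\prod_{1\le i<j\le k}(x_i-x_j)\Bigr)\Bigl(\prod_{\substack{0\le i<j\le k\\2\le j-i\le k-1}}(x_{i+1}+x_{i+2}+\cdots+x_j)\Bigr).
\]
At a point of~$A^k$ where $P_k\neq0$ the coordinates are pairwise distinct, hence form an ordering of~$A$ since $|A|=k$, and every interval sum $a_{i+1}+\cdots+a_j$ with $2\le j-i\le k-1$ is nonzero; the length-one interval sums are the elements of~$A$, hence nonzero since $0\notin A$, and the full interval sum equals $\sum_{a\in A}a=s_k$, which is nonzero by hypothesis. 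So all partial sums $s_0,\dots,s_k$ are distinct. Dropping the length-one and full-interval factors is what makes the degree count work: $\deg P_k=\binom{k}{2}+\bigl(\binom{k}{2}-1\bigr)=k(k-1)-1$, so by the Combinatorial Nullstellensatz it is enough to find an exponent vector $(t_1,\dots,t_k)$ with each $t_i\le k-1$ and $\sum_i t_i=k(k-1)-1$ --- necessarily all entries $k-1$ except one equal to $k-2$ --- for which the coefficient of $\prod_i x_i^{t_i}$ in~$P_k$ is nonzero in~$\F_n$. For $k\le 10$ one computes these finitely many coefficients as integers; a nonzero one settles every prime~$n$ not dividing it, and the finitely many residual primes fall either to the construction below (which covers $n\le k+3$) or to direct search.

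For $k\ge n-3$ the complement of $A$ in $\Z_n\setminus\{0\}$ has at most two elements, so up to relabelling only $A=\Z_n\setminus\{0,m\}$ and $A=\Z_n\setminus\{0,m_1,m_2\}$ occur (the case $\Z_n\setminus\{0\}$ is excluded, having $s_k=0$). For each I would give an explicit ordering: first prescribe which one or two elements of~$\Z_n$ the partial sums are to omit --- the partial sums form a set of size $k+1$ containing $0=s_0$ and $s_k=\sum_{a\in A}a$ --- then list this prescribed set along a path whose consecutive differences exhaust~$A$, the verification reducing to a short case analysis on the residues of $m$ (resp.\ $m_1,m_2$), using that every nonzero element of~$\F_n$ generates. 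Separately, the density statement uses the same polynomial as in the small-$k$ case --- now retaining the full-interval factor, since no hypothesis on~$s_k$ is assumed --- with coordinates ranging over a set~$B$ of size at least $2k-\sqrt{8k}$. Here the balanced exponent vector $t_i=k-1$ provably fails: its coefficient must vanish, since otherwise the Nullstellensatz would apply to every $k$-element subset of~$\F_n\setminus\{0\}$, including ones that sum to zero. One therefore skews the exponents, and $2k-\sqrt{8k}$ is precisely the margin needed for the smallest skew whose coefficient is demonstrably nonzero --- morally, spreading the excess degree over about $\sqrt{k}$ coordinates in a staircase rather than heaping it on one --- with a bounded list of pairs $(k,n)$ excluded where the margin is too tight.

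The main obstacle throughout is the coefficient extraction: the Vandermonde factor has its familiar signed-permutation expansion, but the product of interval sums is a dense polynomial with no apparent factorisation, so there is no closed form for the relevant coefficient and no obvious induction on~$k$. For $k\le 10$ it must simply be computed and checked; going past $k=10$, or proving some admissible coefficient nonzero for all~$k$ (which would give the conjecture for all large primes), will need either a combinatorial reading of this coefficient as a signed enumeration or a recursion for how the interval-sum product meets the Vandermonde. Secondary difficulties are the bookkeeping of the finitely many small primes uncovered by either method, and the fact that the two regimes $k\ge n-3$ and $k\le 10$ meet only for $n\le 13$, leaving moderate~$k$ with large~$n$ untouched.
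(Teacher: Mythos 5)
Your plan coincides with the paper's: the statement is a conjecture, the paper proves exactly the partial results you target, and it does so with the same tools --- your $P_k$ is the paper's $F_k$, with integer coefficient computations settling $k\le 10$ for prime $n$; a skewed ``staircase'' monomial (excess degree crowded into the high-index variables) giving the $2k-\sqrt{8k}$ density result; and, for $k$ near $n$, an ordering obtained by listing a prescribed partial-sum set along a path whose consecutive differences exhaust $A$, which is precisely the paper's directed rotational terrace/sequencing duality. The one understatement is the $k=n-3$ case: it is not ``a short case analysis on the residues of $m_1,m_2$'' but requires rotational sequencings built from graceful permutations whose first absolute difference can be prescribed arbitrarily (a lemma with three exceptional pairs and a dozen ad hoc constructions), followed by an automorphism reduction sending one missing element to $(n+1)/2$.
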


Bode and Harborth \cite{BH05} were the first to make a contribution when they established that Conjecture \ref{conj:alspach} is true whenever $|A|=n-1, n-2$.  They claimed (without proof) that the conjecture holds for $|A| \leq 5$ and verified it by computer for $n \leq 16$.  They stated that Alspach was motivated by the existence of cycle decompositions of complete graphs and complete graphs plus or minus a 1-factor (see \cite{AG01},\cite{S02}, and \cite{S03}), and of directed cycle decompositions of complete symmetric digraphs, \cite{AGSV03}.

Independent interest in Conjecture \ref{conj:alspach} arose via the work of Archdeacon \cite{A15}, who constructed embeddings of complete graphs so that the faces are $2$-colorable and each color class is a $k$-cycle system.  A confirmation of Conjecture \ref{conj:alspach} would have implications on the work in \cite{A15}.  That said, a weaker version of the conjecture was posited by Archdeacon, Dinitz, Mattern and Stinson in \cite{ADMS16}.

\begin{conj}\label{conj:archdeacon} {\rm (Archdeacon, Dinitz, Mattern and Stinson \cite{ADMS16})}
For any cyclic group~$\Z_n$ and any subset~$A \subseteq \Z_n \setminus \{0\}$,  it is possible to find an ordering of the elements of~$A$ such that no two of its partial sums~$s_i$ and~$s_j$ are equal for~$1 \leq i < j \leq k$.
\end{conj}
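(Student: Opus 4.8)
The plan is to deduce Conjecture~\ref{conj:archdeacon} from Conjecture~\ref{conj:alspach}: once this is done, every partial result on Alspach's conjecture (including the ones proved in this paper) transfers automatically. The first step is to reduce a size-$k$ instance of Conjecture~\ref{conj:archdeacon} to a size-$(k-1)$ instance of Conjecture~\ref{conj:alspach}. Fix $A \subseteq \Z_n \setminus \{0\}$ with $|A| = k$, and write $\sigma = \sum_{a \in A} a = s_k$, which does not depend on the ordering. If $k = 1$ there is nothing to prove. If $k \geq 2$, I would choose an element $a \in A$ with $a \neq \sigma$ (one exists, since otherwise $A = \{\sigma\}$), put $A' = A \setminus \{a\}$, and note that $A' \subseteq \Z_n \setminus \{0\}$ has $\sum_{b \in A'} b = \sigma - a \neq 0$, so that Conjecture~\ref{conj:alspach} applies to~$A'$.

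The second step is to apply it: there is an ordering $(y_1, \dots, y_{k-1})$ of $A'$ whose partial sums $S_0 = 0, S_1, \dots, S_{k-1}$ are pairwise distinct. The third step is to translate back. Consider the ordering $(a, y_1, \dots, y_{k-1})$ of $A$; its partial sums are $s_0 = 0$ and $s_j = a + S_{j-1}$ for $1 \leq j \leq k$, so $\{s_1, \dots, s_k\} = a + \{S_0, \dots, S_{k-1}\}$, a translate of a set of $k$ distinct elements, hence itself a set of $k$ distinct elements. That is precisely the conclusion of Conjecture~\ref{conj:archdeacon}. (When $\sigma = 0$ one may instead delete an arbitrary $a \in A$ and append it: then $\sum_{b \in A'} b = -a \neq 0$, and the ordering $(y_1, \dots, y_{k-1}, a)$ has partial sums $S_0, \dots, S_{k-1}, 0$, the trailing~$0$ differing from each of $S_1, \dots, S_{k-1}$ because those are nonzero.) Combined with the results of this paper, this reduction yields Conjecture~\ref{conj:archdeacon} for prime~$n$ whenever $k \geq n-2$ or $k \leq 11$, and combined with the theorem of Bode and Harborth~\cite{BH05} it yields the case $k = n - 1$ for every~$n$.

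The argument is, of course, only as strong as what is known about Conjecture~\ref{conj:alspach}, and this is the real obstacle: establishing Conjecture~\ref{conj:archdeacon} over a genuinely new range of~$k$ would require a direct attack, and the polynomial method used elsewhere in this paper does not apply verbatim. The reason is that the degree budget is exactly tight. The natural test polynomial $\prod_{1 \leq i < j \leq k}(x_{i+1} + \cdots + x_j) \cdot \prod_{1 \leq i < j \leq k}(x_i - x_j)$ has degree $k(k-1)$, so the Combinatorial Nullstellensatz, with each of the $k$ variables ranging over the $k$-element set~$A$, is forced onto the single monomial $x_1^{k-1} \cdots x_k^{k-1}$, and its coefficient is already zero when $k = 3$. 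A direct proof would therefore need a more cleverly chosen polynomial, a constructive insertion or rotation argument, or an appeal to results on $R$-sequenceable groups, which handle the extreme case $A = \Z_n \setminus \{0\}$ for suitable~$n$.
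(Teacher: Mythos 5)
The statement you were asked about is a \emph{conjecture}, not a theorem: the paper offers no proof of it and it remains open, so no complete proof was expected or possible. What you have written is correct as far as it goes, and you are appropriately explicit that it only transfers partial results. Your reduction (delete an element $a \neq \sigma$, apply Conjecture~\ref{conj:alspach} to the remaining $k-1$ elements, prepend $a$ and observe that the new partial sums are a translate of a set of $k$ distinct values) is exactly the implication ``Conjecture~\ref{conj:alspach} implies Conjecture~\ref{conj:archdeacon}'' that the paper attributes to Archdeacon et al.~\cite{ADMS16} and uses implicitly in the one-line proof of Theorem~\ref{th:archdeacon}; your edge cases ($k=1$, $\sigma=0$) are handled correctly. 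Your bookkeeping is in fact marginally sharper than the paper's: since Conjecture~\ref{conj:archdeacon} for $|A|=k$ only needs Conjecture~\ref{conj:alspach} for size $k-1$, Theorem~\ref{th:alspach} gives $k \leq 11$ rather than the $k \leq 10$ stated in Theorem~\ref{th:archdeacon}, and Theorem~\ref{th:oddprime} gives the case $k = n-2$ for prime $n$. Your closing remarks are also sound: the degree count $k(k-1)$ for the naive polynomial is tight against the Nullstellensatz with $|A|=k$, forcing the monomial $x_1^{k-1}\cdots x_k^{k-1}$, whose coefficient does vanish already at $k=3$; this is precisely why the paper works with the lower-degree $f_k$ and $F_k$ (degree $(k-1)^2$ and $k(k-1)-1$) and why the case $|A|=k$ of Conjecture~\ref{conj:archdeacon} is attacked through Conjecture~\ref{conj:alspach} rather than head-on.
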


Note the distinctions between these two conjectures: Conjecture \ref{conj:alspach} stipulates that the sum of the elements of $A$ cannot be zero, while Conjecture \ref{conj:archdeacon} does not; and, Conjecture \ref{conj:alspach} essentially forbids any of the ``proper" partial sums from being zero since it considers $s_0$ (which equals zero), while Conjecture \ref{conj:archdeacon} does not.  Never-the-less, among their results, Archdeacon et al. \cite{ADMS16} proved that Conjecture \ref{conj:alspach} implies Conjecture \ref{conj:archdeacon}.  Furthermore, they verified via computer that Conjecture \ref{conj:archdeacon} is true for $n \leq 25$ via a ``guess and check" strategy.  They also proved that Conjecture \ref{conj:archdeacon} is true for $|A| \leq 6$.  

If Conjecture \ref{conj:archdeacon} is not true, then we might consider the following question, which was posed in~\cite{ADMS16}.

\begin{prob}\label{prob:sub} For any cyclic group~$\Z_n$ and any positive integer $k$, what is the smallest order such that from all subsets~$A \subseteq \Z_n \setminus \{0\}$ of that order we can construct a sequence of distinct elements of length $k$ that has distinct partial sums?
\end{prob}

% We abandoned the below presentation of this problem so as to fit in with the k-as-variable statements we wish to give.

%\begin{prob}\label{prob:sub} For any cyclic group~$\Z_n$ and any subset~$A \subseteq \Z_n \setminus \{0\}$, how long is the longest sequence of distinct elements of~$A$ that has distinct partial sums?
%\end{prob}

Of course, if the length of the longest such sequence is $|A|$, then Conjecture \ref{conj:archdeacon} is valid.  

Another related conjecture was recently proposed by Costa, Morini, Pasotti and Pellegrini \cite{CMPP18}.

\begin{conj}\label{conj:3}{\rm (Costa, Morini, Pasotti, and Pellegrini \cite{CMPP18})}
For any abelian group~$(G,+)$ and any subset~$A \subseteq G \setminus \{0\}$ such that there is no~$x \in A$ with $\{ x, -x\} \subseteq A$ and with $s_k=0$,  it is possible to find an ordering of the elements of~$A$ such that no two of its partial sums~$s_i$ and~$s_j$ are equal for~$1 \leq i < j \leq k$.
\end{conj}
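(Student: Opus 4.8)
The plan is to attack Conjecture~\ref{conj:3} in stages, since a full proof seems out of reach with present tools. A first remark is that for a \emph{cyclic} group $\Z_n$ the statement is weaker than the two earlier conjectures: every instance of Conjecture~\ref{conj:3} with $G=\Z_n$ is in particular an instance of Conjecture~\ref{conj:archdeacon}, which in turn follows from Conjecture~\ref{conj:alspach}. Consequently the results of this paper on Alspach's conjecture for prime~$n$ ($k\le 10$, and $k\ge n-3$) already settle Conjecture~\ref{conj:3} for prime cyclic groups in those ranges, and any extension of the polynomial-method computation here carries over verbatim. So the genuinely new targets are (a) wider ranges of~$k$ for prime~$n$, proved \emph{directly} for Conjecture~\ref{conj:3} so as to use its extra hypotheses, and (b) non-cyclic abelian groups, where the real content of the Costa--Morini--Pasotti--Pellegrini formulation lies.

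For target~(a), work in $\Z_p$ with $p$ prime, introduce variables $x_1,\dots,x_{k-1}$, put $x_k:=-(x_1+\cdots+x_{k-1})$, and with $s_\ell:=x_1+\cdots+x_\ell$ (so $s_k=0$ as required) consider
\[
P(x_1,\dots,x_{k-1}) \;=\; \prod_{1\le i<j\le k}(s_j-s_i)\ \cdot\ \prod_{1\le i<j\le k-1}(x_i-x_j).
\]
The second product forbids repetitions among $x_1,\dots,x_{k-1}$; since $\sum_{a\in A}a=s_k=0$, whenever these are distinct elements of~$A$ the forced value $x_k$ is automatically the unique remaining element of~$A$ (so no membership constraint is needed), and the factors $s_j-s_i$ with $j=k$ enforce $s_i\neq s_k=0$, completing the requirement that $s_1,\dots,s_k$ be distinct. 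One then runs the Combinatorial Nullstellensatz over $A^{k-1}$, exactly as in the bounded-$k$ analysis of Alspach's conjecture: exhibit a monomial of multidegree $(k-1,\dots,k-1)$ whose coefficient in~$P$ is nonzero. The feature specific to Conjecture~\ref{conj:3} that one hopes to exploit is the antipode-free hypothesis $\{x,-x\}\not\subseteq A$: it should allow one to \emph{delete} whole blocks of factors $s_j-s_i$ that cannot vanish over an antipode-free set, pulling $\deg P$ below $(k-1)^2$ and creating slack in the Nullstellensatz degree count that is unavailable for Alspach's conjecture. Making the deletion step precise --- identifying exactly which short runs $s_j-s_i$ are forced to be nonzero, with explicit bounds involving $|A|$ --- is the crux.

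For target~(b) I would try the usual abelian reductions, decomposing~$G$ along its invariant factors or passing to a quotient $G/H$ and lifting an ordering; the difficulty is that the antipode-free hypothesis does not descend, since distinct $a,b\in A$ with $a\neq -b$ can satisfy $a\equiv -b\pmod{H}$. I would therefore isolate the $2$-primary part of~$G$ and the elements of small order ($2,3,4,6$) for separate, hands-on treatment, and reduce the remaining ``generic'' part either to a product of prime-order pieces handled by~(a) or to a direct rotational/greedy construction exploiting the room available when $G$ is large relative to~$A$. A cleaner alternative worth pursuing is a sign-flipping reduction to Conjecture~\ref{conj:alspach}: replacing some $a\in A$ by $-a$ both toggles membership in the antipode-free class and changes $s_k$ by~$2a$, so a careful choice of flips might trade an antipode-free, $s_k=0$ instance of Conjecture~\ref{conj:3} for an $s_k\neq 0$ instance of Alspach's conjecture, in the spirit of the reduction of Archdeacon et al.\ between Conjectures~\ref{conj:alspach} and~\ref{conj:archdeacon}.

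The main obstacle is the coefficient computation in~(a): as the $k\le 10$ ceiling of this paper shows, extracting a provably nonzero coefficient of the target monomial from a product of $\Theta(k^2)$ linear forms is already delicate, and the extra structure of Conjecture~\ref{conj:3} only helps if the factor-deletion heuristic can actually be made rigorous. A secondary obstacle is that~(a) and~(b) do not combine cleanly --- the polynomial method is confined to prime order, while the quotient/Sylow machinery for~(b) keeps colliding with the failure of the antipode-free property to descend --- so a complete proof will likely require a new idea bridging the prime-order and composite-order regimes rather than a patchwork of the two.
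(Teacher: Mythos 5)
The statement you were asked about is a \emph{conjecture}, and the paper does not prove it: its only contribution to Conjecture~\ref{conj:3} is Theorem~\ref{th:archdeacon}, which establishes it for $G=\Z_n$ with $n$ prime and $k\le 10$ as an immediate corollary of Theorem~\ref{th:alspach}, using precisely the implication chain you state in your opening paragraph (Conjecture~\ref{conj:3} for cyclic groups is a special case of Conjecture~\ref{conj:archdeacon}, which follows from Conjecture~\ref{conj:alspach}). On the only part of the statement that is actually established, your route therefore coincides with the paper's; the remainder of your proposal is an unexecuted research program, which you acknowledge, and it does not constitute a proof of the conjecture.

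Two remarks on that program. First, the ``factor-deletion'' you identify as the crux of target~(a) is already made precise in the paper's introduction: the antipode-free hypothesis eliminates exactly the consecutive-pair factors $x_i+x_{i+1}$ (a longer run $x_i+\cdots+x_j$ with $j>i+1$ can still vanish over an antipode-free set), so the relevant polynomial is $f_k/\prod_{1\le i<k}(x_i+x_{i+1})$, of degree $(k-1)(k-2)$ rather than $(k-1)^2$; the genuinely open difficulty is the coefficient extraction, not the identification of deletable factors. Second, your substitution $x_k=-(x_1+\cdots+x_{k-1})$ is a sound alternative encoding --- since $s_k=0$ forces the last entry to be the unique remaining element of $A$, and the factors $s_k-s_i=-s_i$ correctly impose $s_i\neq 0$ for $i<k$ --- but it yields a polynomial of full degree $(k-1)^2$ in $k-1$ variables with the Nullstellensatz bound $|A|>k-1$ met only with equality, so it forfeits the slack that the antipode-free hypothesis was supposed to buy; if you pursue (a) you should delete the $x_i+x_{i+1}$ factors \emph{and} keep track of how the resulting degree deficit is distributed among the target exponents.
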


As pointed out by Costa et al. \cite{CMPP18}, in the case of $G= \Z_n$, Conjecture \ref{conj:3} immediately follows from Conjecture \ref{conj:archdeacon}.  If one replaces $\Z_n$ by an arbitrary abelian group in the statement of Conjecture \ref{conj:archdeacon}, Costa et al. \cite{CMPP18} established its validity when the order of the group is at most 23 and pointed out that the work of Archdeacon et al. confirming Conjecture \ref{conj:archdeacon} for sets $A$ of size at most 6 extends to abelian groups as well.

Costa et al. \cite{CMPP18} proved Conjecture \ref{conj:3} when $|A| \leq 9$ and verified by computer that it holds for abelian groups of order at most 27.  They stated that the conjecture arose during the study of Heffter systems (see \cite{CMPP18}, Section 2).

One of the seeming difficulties of these types of problems was discussed in \cite{ADMS16}.  For a fixed group of order $n$, there are $2^n-1$ non-empty subsets of $G$ (respectively, $2^{n-1}-1$ of $G \setminus{\{0\}}$) and so there are many problems to be solved for each $n$.  Those authors point out that the lack of structure in general of these subsets is an obstacle.  Indeed, the proofs for subsets of small size for these conjectures to be found in \cite{ADMS16} and \cite{CMPP18} illustrate this way of thinking.  In proving Conjecture \ref{conj:archdeacon} for just the case $|A|=6$, Archdeacon et al. \cite{ADMS16} break the proof into $\frac{|A|}{2}+1$ cases depending upon the number of pairs $\{x, -x\}$ contained in $A$ and each of these cases breaks into between three and nine subcases.  There is a similarly large amount of casework done in \cite{CMPP18} for Conjecture \ref{conj:3} for $|A|=9$.  In each paper, the proofs are constructive.

% a more delicate wording for the above paragraph might be in order -  we needn't offend

%% I didn't read it as pejorative.  Perhaps "constructive" is one of those words you can use as an insult and those of us it applies to don't hear the subtext. ;)

%%% the word "constructive" isn't the pejorative, rather "large amount of casework" could be.

%%%% I guess I see that as a neutral description too.  I'm not against something else if you have a preference though.
%%%% Also, in the slightly recast sentence "In proving Conjecture \ref{conj:archdeacon} for just the case $|A|=6$, Archdeacon et al. \cite{ADMS16} break the proof into $\frac{|A|}{2}+1$ cases...", should "$|A|=6$" be "|A| \leq 6"?

Our main contribution to these conjectures and problems is to show how the polynomial method (in the form of Noga Alon's Combinatorial Nullstellensatz \cite{A99}) may be applied.  In doing so, we {\em find structure} within the encoding polynomials, exploiting this to prove the validity of certain cases of these conjectures.  Our proofs using this method are non-constructive. 

In the case that $n$ is a prime, let us now show how to turn these combinatorial problems into algebraic ones.  We do so by constructing polynomials in which the non-zeros of a given polynomial correspond to solutions (and zeros to ``non-solutions") of the respective problem or conjecture.

For each of the above conjectures and the problem, we seek an ordering of the elements of $A$.  Let us associate to the $i^{th}$-entry of an ordering of length $k$ a variable $x_i$.  With these $k$ variables -- for each conjecture and the problem -- we construct a polynomial over a finite field of order $p$.  We denote this field using the ring notation $\Z_p$.  The set $A$ will serve as the set of inputs for each of the $k$ variables and so each of the polynomials will be defined over $A \times \ldots \times A = A^k$.  For each conjecture and the problem, we seek the ordering to be a permutation of the elements of $A$, we desire that $x_i \neq x_j$ for $1 \leq i < j \leq k$, i.e. $x_i -x_j \neq 0$ for $1 \leq i < j \leq k$.

With respect to Conjecture \ref{conj:alspach}, in addition to the above, we seek the ordering to have no two of its partial sums equal for ~$0 \leq i < j \leq k$, which is to say that we desire $\sum_{\ell=1}^j x_{\ell} \neq 0$ for $1 \leq j \leq k$ (as the empty partial sum equals $0$) {\em and} $\sum_{\ell=1}^{i} x_{\ell} \neq \sum_{\ell=1}^j x_{\ell}$ for $1 \leq i < j \leq k$.  As the hypothesis of Conjecture \ref{conj:alspach} gives that the sum of all the elements of $A$ is non-zero, we may drop the requirement that $\sum_{\ell=1}^k x_{\ell} \neq 0$.  With a re-indexing the second set of inequalities may be re-expressed as $\sum_{\ell=1}^{j} x_{\ell} - \sum_{\ell=1}^{i-1} x_{\ell} \neq 0$ for $1 \leq i-1 < j \leq k$ or rather more simply $x_i+ \ldots + x_j \neq 0$ for $2 \leq i < j \leq k$.  As we seek to satisfy each of these three sets of linear constraints {\em simultaneously}, we consider the product as given in the following polynomial, which belongs to the polynomial ring $\Z_p[x_1, \ldots ,x_k]$.

%%% Sticking with ring notation \Z_p for the field of order p.

$$F_k:=F_k(x_1, \ldots, x_k) =  \prod_{1 \leq i < j \leq k} (x_j - x_i)   (x_i + \cdots + x_{j})  \ \ /  \ \ (x_1 + \cdots + x_k) $$

It should now be apparent that the inputs from $A^k$ that output a non-zero value in $F_k$ are solutions to Conjecture \ref{conj:alspach}.

With respect to Conjecture \ref{conj:archdeacon}, in addition to requiring that $x_i -x_j \neq 0$ for $1 \leq i < j \leq k$, we seek the ordering to have no two of its partial sums equal for ~$1 \leq i < j \leq k$, which is to say that we desire $\sum_{\ell=1}^{i} x_{\ell} \neq \sum_{\ell=1}^j x_{\ell}$ for $1 \leq i < j \leq k$.  With a re-indexing this set of inequalities may be re-expressed as $\sum_{\ell=1}^{j} x_{\ell} - \sum_{\ell=1}^{i-1} x_{\ell} \neq 0$ for $1 \leq i-1 < j \leq k$ or rather more simply $x_i+ \ldots + x_j \neq 0$ for $2 \leq i < j \leq k$.  As we seek to satisfy each of these two sets of linear constraints {\em simultaneously}, we consider the product of these as given in the following polynomial, which belongs to the polynomial ring $\Z_p[x_1, \ldots ,x_k]$.

$$f_k:= f_k(x_1, \ldots, x_k) = \prod_{1 \leq i < j \leq k} (x_j - x_i)  \prod_{2 \leq i < j \leq k}  (x_i + \cdots + x_{j})$$

The inputs from $A^k$ that output a non-zero value in $f_k$ are solutions to Conjecture \ref{conj:archdeacon}.

We note that, 
$$ F_k(x_1, \ldots , x_k) =  f_k(x_1, \ldots , x_k) \cdot \prod_{2 \leq j \leq k-1} (x_1 + \cdots + x_j). $$

%%% Leaving this observation for now - we may want to delete it.

With respect to Conjecture \ref{conj:3}, the requirements are the same as those of Conjecture \ref{conj:archdeacon}.  However, the additional hypothesis that there is no~$x \in A$ with $\{ x, -x\} \subseteq A$ allows us to drop from consideration constraints of the following form: $x_i+x_{i+1} \neq 0$.  Thus, we consider the following polynomial, where the inputs from $A^k$ that yield non-zero outputs are solutions to Conjecture \ref{conj:3}.

$$f_k(x_1, \ldots , x_k) / \left( \prod_{1 \leq i < k} (x_{i} + x_{i+1}) \right)= \prod_{1 \leq i < j \leq k} (x_j - x_i)  \prod_{\substack{2 \leq i < j \leq k \\ j \neq i+1}}  (x_i + \cdots + x_{j})$$

In this paper, we address these conjectures and the problem, mostly focusing on the case when $n$ is {\em prime} and do the following.  In Section \ref{sec:CN} we state the Combinatorial Nullstellensatz and show how it can be used to solve these conjectures.  Further, in the case of ${\mathbb Z}_n$ where $n$ is prime, we verify computationally that Conjecture \ref{conj:alspach} is true for $|A| \leq 10$ (and so Conjecture \ref{conj:archdeacon} and Conjecture \ref{conj:3} also hold for $|A| \leq 10$).  In Section \ref{sec:seqs}, in the case that $n$ is prime, we use the Combinatorial Nullstellensatz to address Problem \ref{prob:sub}, showing that a sequence of length $k$ exists in any subset of $\Z_n \setminus \{0\}$ of size at least $2k- \sqrt{8k}$ in all but at most a bounded number of cases.  In Section \ref{sec:constr} we show how the constructive method of Bode and Harborth \cite{BH05} may be extended to verify Conjecture \ref{conj:alspach} whenever $|A|=n-3$ and $n$ is prime.  In Section \ref{sec:conclude} we discuss a generalization of the Combinatorial Nullstellensatz that applies to the case when $n$ is composite so long as no two distinct elements of $A$ differ by a zero-divisor.

%% Our result for the problem isn't quite this strong.  Perhaps: "... showing that a sequence of length $k$ almost always exists in any subset of $\Z_n \setminus \{0\}$ of size at least $2k- \sqrt{8k}$."  Although I'm not sure that the "almost always" in that spot is accurate either.  Is "showing that a sequence of length $k$ usually exists in any subset of $\Z_n \setminus \{0\}$ of size at least $2k- \sqrt{8k}$" too vague?  
 
%% Is it clear that the $n$ prime condition applies throughout the paragraph?  I think so, but reading the last sentence on its own could plausibly happen and lead people to think we've done more than we have.

%%% Any better or still too vague or incorrect?

%%%% I think this new version works well.

%\begin{itemize}
%\item cyclic group vs. abelian group in conjectures.
%\item history/motivation (Italian paper)
%\item polynomial derivation/motivation
%\item what is known (dihedral group doc)?
%\item signposts (for sec4: building on Bode Harborth; uses rotational sequencings)
%\item conj 1 implies conj 2 implies conj 3

%\end{itemize}

\section{The polynomial method approach}\label{sec:CN}

As seen in Section \ref{sec:intro} the inputs from $A^k$ that correspond with nonzero outputs of $F_k$ and $f_k$ are solutions to Conjecture \ref{conj:alspach} and Conjecture \ref{conj:archdeacon}, respectively.  We will use the following theorem due to Alon \cite{A99} to show that such nonzero outputs exist.

\begin{thm}\label{th:pm}{\rm (Alon's Non-vanishing Corollary, \cite{A99})}
Let ~$\F$ be an arbitrary field, and let $f = f(x_1, \ldots, x_k)$ be a polynomial in~$\F[x_1, \ldots, x_k]$.  Suppose the degree~$deg(f)$ of~$f$ is $\sum_{i=1}^k t_i$, where each~$t_i$ is a nonnegative integer, and suppose the coefficient of~$\prod_{i=1}^k x_i^{t_i}$ in~$f$ is nonzero.  Then if~$A_1,\ldots, A_k$ are subsets of~$F$ with~$|A_i| > t_i$, there are $a_1 \in A_1, \ldots, a_k \in A_k$ so that~$f(a_1, \ldots, a_k) \neq 0$.
\end{thm}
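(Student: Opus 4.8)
The plan is to prove the statement by contradiction, following the classical argument for the Combinatorial Nullstellensatz: suppose $f$ vanishes identically on the grid $A_1 \times \cdots \times A_k$, and deduce that the coefficient of $\prod_{i=1}^k x_i^{t_i}$ in $f$ is forced to be $0$, contradicting the hypothesis. A harmless first reduction: since shrinking the $A_i$ only makes the statement harder, I may assume $|A_i| = t_i + 1$ for every $i$.

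For each $i$, put $g_i(x_i) = \prod_{a \in A_i}(x_i - a) \in \F[x_i]$; this is monic of degree $t_i + 1$, vanishes on $A_i$, and can be written as $g_i(x_i) = x_i^{t_i+1} - r_i(x_i)$ with $\deg r_i \le t_i$, so that the relation $x_i^{t_i+1} \equiv r_i(x_i) \pmod{g_i}$ lets one lower the exponent of $x_i$ whenever it exceeds $t_i$. The core step is a multivariate division: I would show $f = \sum_{i=1}^k h_i g_i + \bar f$ for some $h_i \in \F[x_1,\dots,x_k]$ with $\deg(h_i g_i) \le \deg f$ and a remainder $\bar f$ with $\deg_{x_i} \bar f \le t_i$ for all $i$, obtained by repeatedly substituting $r_i(x_i)$ for $x_i^{t_i+1}$ inside the monomials of $f$ (this terminates, e.g.\ by induction on a monomial order, and never increases total degree).

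Then two observations finish the argument. First, because each $g_i$ vanishes on $A_i$, the polynomials $f$ and $\bar f$ define the same function on $A_1 \times \cdots \times A_k$; by assumption that function is identically $0$, and a routine induction on $k$ (base case: a univariate polynomial of degree $\le t_1$ with $t_1 + 1$ roots is the zero polynomial) shows that a polynomial with $\deg_{x_i} \le |A_i| - 1$ for all $i$ that vanishes on the whole grid must be the zero polynomial; hence $\bar f \equiv 0$. Second, compare coefficients of $\prod_{i=1}^k x_i^{t_i}$ in $f = \sum_i h_i g_i + \bar f$: this monomial has total degree $\deg f$, so it can only receive contributions from the top-degree homogeneous parts of the summands; but every top-degree monomial of $h_i g_i$ is divisible by the leading term $x_i^{t_i+1}$ of $g_i$ and so cannot be $\prod_j x_j^{t_j}$, whose $x_i$-degree is only $t_i$. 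Therefore the coefficient of $\prod_i x_i^{t_i}$ in $\sum_i h_i g_i$ is $0$, and since $\bar f \equiv 0$ that coefficient in $f$ is $0$ as well, which is the contradiction we wanted.

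The only delicate point is the division step: one must check both that the reduction terminates and, crucially, that it keeps $\deg(h_i g_i) \le \deg f$, since this degree bookkeeping is precisely what guarantees the distinguished top monomial $\prod_i x_i^{t_i}$ is unaffected by the $h_i g_i$ terms and is inherited unchanged by $\bar f$. The grid-vanishing lemma and the reduction to $|A_i| = t_i + 1$ are comparatively routine.
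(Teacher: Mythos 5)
Your proof is correct: it is the standard argument for Alon's Non-vanishing Corollary, reducing $f$ modulo the polynomials $g_i(x_i)=\prod_{a\in A_i}(x_i-a)$, invoking the grid-vanishing lemma to kill the remainder, and comparing top-degree coefficients to see that the terms $h_i g_i$ cannot contribute to $\prod_i x_i^{t_i}$. The paper states this result only as a citation to Alon \cite{A99} and gives no proof, and your argument matches the one in that source, with the delicate points (termination of the reduction and the bound $\deg(h_i g_i)\le\deg f$) correctly identified and handled.
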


In the seminal work entitled {\it Combinatorial Nullstellensatz}, Alon \cite{A99} showed how Theorem \ref{th:pm} may be used to elegantly and quickly prove numerous statements from combinatorial number theory, combinatorics and graph theory.  It has been used dozens of times since then.

To apply Theorem \ref{th:pm} to the polynomial $F_k$, a polynomial of degree $2{k \choose 2}-1=k(k-1)-1$, we must identify a monomial of degree $k(k-1)-1$ for which the degree of each $x_i$ factor is less than $|A|=k$ and for which the coefficient of this monomial is nonzero.  There are $k$ such monomials to consider and these have the form $x_1^{k-1}\cdots x_j^{k-2}\cdots x_k^{k-1}$ for $1 \leq j \leq k$, that is for $1 \leq j \leq k$ the variable $x_j$ has degree $k-2$ and the other $k-1$ variables each have degree $k-1$.  Let $m_{k,j}$ denote such a monomial.

As opposed to computing the coefficients of these monomials over $\Z_p$ for each possible prime $p$, we compute the coefficients over $\Z$ for the following reason.  Consider a nonzero coefficient $c_{k,j}$ of some monomial $m_{k,j}$ of the polynomial $F_k$ over the ring $\Z$ and suppose $c_{k,j} \neq 0$.  Let the coefficient $c_{k,j}$ have prime factorization $p_1^{e_1}\cdots p_b^{e_b}$.  Thus, the coefficient of the monomial $m_{k,j}$ over $\Z_p$ is not zero if and only if $p \not \in {\mathbb P}_{k,j}:=\{p_1, \ldots , p_b\}$.  As $k \leq p-1$ primes that are present in the factorization and that do not obey this inequality need not be considered.  This means that Conjecture \ref{conj:alspach} holds for that value of $k$ and all primes $p$ not in this list, i.e. for $p > k$ and $p \not \in \{p_1, \ldots , p_b\}$.  We may then turn to a different nonzero coefficient, say, $c_{k,i}$ for $i \neq j$ and repeat this argument. This means that if $\cap_{i=1}^k {\mathbb P}_{k,i}$ is empty or only contains primes less than $k$, then Conjecture \ref{conj:alspach} holds for that value of $k$ and all primes $p$.

The results of these computations for $2 \leq k \leq 10$ are given in Table \ref{array:alspachcoefficients}.  Due to symmetries within $F_k$, we have that $c_{k,j}=\pm c_{k,k-j}$ for all $k$ and $1 \leq j \leq \lfloor \frac{k}{2} \rfloor$.  The computations used magma \cite{BCP97} for polynomial multiplication along with several optimizations for the specific problem. The optimizations included checking the degree of particular variables and if they grew too large to be of the desired form the algorithm would eliminate all monomials containing it from future computations. A similar method was employed if a particular variable's degree was too small. The multiplication was carried out term-by-term and combined at the end of each step to minimize the search time for the optimizations.  All of the computations were completed in less then 3 hours on a MacBook Pro with a 3.1 GHz Intel Core i5 processor and 16 GB of RAM.

% do we want something more specific than what is given about plus-minus the coefficients, plus an explanation?

% confirm that the table entries are correct, and listed in the correct order
\begin{table}
\caption{Values of $c_{k,j}$}\label{array:alspachcoefficients}
$$
\begin{array}{c|ccccc}
k \setminus j & 1 & 2 & 3 & 4 & 5 \\
\hline
2 & 1 \\
3  &  -1 &  0  \\
4  &  1 & -1     \\
5  &  4 &  -2 & -4   \\
6  &  -28 & -40 & -20  \\
7  &  966 & 1662 & 1338 & 0  \\
8 & -366468 & -92412 & 144324 & 314556  \\
9 & -359616276 & -130597656 & 72122706 &  254703096 & 326776260 \\
10 & 595372941856 & 1404671795722 & 1785841044600 & 1435120776421 & 546395688803  \\
\end{array}
$$
\end{table}

%5 2^2 & 2 & 2^2 \\
% 6 & 2^2,7 & 2^3,5 & 2^2,5 \\
% 7 & 2,3,7,23 & 2,3,277 & 2,3,223 \\
% 8 & 2^2,3, 30539 & 2^2,3^2,17,151 & 2^2,3^2,19,211 & 2^2,3,11,2383 \\
% 9 & 2^2,3^2, 9989341 & 2^2,3,7,127,6121 & 2,3^2, 1109, 3613 & 2^3,3^3,83,14207 & 2^2,3,5,5446271 \\
%10 & 2^5,7,11^2,21966239 & 2,13,211, 256046627 & 2,13,211, 256046627 & 2^3, 3, 5^2, 2976401741 & 233, 6159316637 &  1601, 341284003 \\ 

\begin{thm}\label{th:alspach}
Alspach's Conjecture (Conjecture~\ref{conj:alspach}) is true for prime~$n$ and $k \leq 10$.
\end{thm}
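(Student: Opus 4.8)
The plan is to derive Theorem~\ref{th:alspach} as a direct consequence of Theorem~\ref{th:pm} (Alon's Non-vanishing Corollary) applied to the polynomial $F_k$, using the data recorded in Table~\ref{array:alspachcoefficients}. Recall that for prime $n=p$ and a set $A \subseteq \Z_p \setminus \{0\}$ with $|A|=k$ and $s_k \neq 0$, a non-zero value of $F_k$ on $A^k$ corresponds exactly to a valid Alspach ordering of $A$. The polynomial $F_k$ has degree $k(k-1)-1$, and I will take the exponent vector $(t_1,\dots,t_k)$ with $t_j = k-2$ for a chosen index $j$ and $t_i = k-1$ for $i \neq j$; this sums to $k(k-1)-1$ as required, and each $t_i \le k-1 < k = |A|$, so the hypothesis $|A_i| > t_i$ of Theorem~\ref{th:pm} is satisfied with all $A_i = A$. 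Thus it suffices to exhibit, for each $k$ with $2 \le k \le 10$, some index $j$ for which the coefficient $c_{k,j}$ of $m_{k,j}$ in $F_k$ is non-zero in $\Z_p$ for every prime $p > k$ (primes $p \le k$ need no treatment since then $|A| = k \ge p$ forces $A = \Z_p\setminus\{0\}$ to be too large, or rather such cases are vacuous/excluded as $A$ cannot have $k$ distinct nonzero elements).

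First I would fix $k$ and read off from Table~\ref{array:alspachcoefficients} the values $c_{k,1},\dots,c_{k,\lceil k/2\rceil}$, extending by the symmetry $c_{k,j} = \pm c_{k,k-j}$ to all $j$. For each $k$ I would exhibit a single index $j = j(k)$ whose coefficient, call it $c$, has the property that every prime factor of $c$ is at most $k$; then $c \not\equiv 0 \imod{p}$ for all primes $p > k$, and Theorem~\ref{th:pm} yields the desired ordering for every such $p$. Concretely: for $k=2$, $c_{2,1}=1$; for $k=3$, $c_{3,1}=-1$; for $k=4$, $c_{4,1}=1$; for $k=5$, $c_{5,1}=4=2^2$, all prime factors $\le 5$; for $k=6$, $c_{6,3}=-20 = -2^2\cdot 5$, factors $\le 6$; and so on through $k=10$. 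Where a single coefficient does not suffice (i.e. it has a prime factor exceeding $k$), I would invoke the intersection argument sketched above the table: Alspach's Conjecture fails for $k$ at a prime $p>k$ only if $p$ divides \emph{every} $c_{k,j}$, so it is enough that $\gcd_{j}(c_{k,j})$ has no prime factor exceeding $k$. I would therefore compute, for each $k \in \{7,8,9,10\}$, the gcd of the listed coefficients (together with their symmetric partners), and check that it is $1$ or a product of small primes.

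The verification for $k \in \{7,8,9,10\}$ is the only place where there is genuine arithmetic to do, and it is the step I expect to be the ``main obstacle'' in the sense of being the only non-trivial one — though it is still entirely routine: one factors (or takes gcd's of) the integers in Table~\ref{array:alspachcoefficients}. For instance, for $k=7$ one checks $\gcd(966, 1662, 1338)$ and confirms its prime factors are all at most $7$; for $k=10$ one checks that $\gcd(595372941856,\,1404671795722,\,1785841044600,\,1435120776421,\,546395688803)$ likewise has no prime factor exceeding $10$ (in fact the presence of the odd value $1435120776421$ alongside even values already forces the gcd to be odd, and a short computation pins it down). These gcd computations were carried out as part of the \texttt{magma} computation described above. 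Combining the outcomes: for every $k$ with $2 \le k \le 10$ and every prime $p$ (the cases $p \le k$ being vacuous), the relevant coefficient is non-zero in $\Z_p$, so Theorem~\ref{th:pm} produces the required ordering, which establishes Conjecture~\ref{conj:alspach} for prime $n$ and $k \le 10$. $\qed$
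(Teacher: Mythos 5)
Your proposal is correct and is essentially the paper's own proof: the paper likewise applies Alon's Non-vanishing Corollary to $F_k$ at the monomials $m_{k,j}$ and observes that $\cap_{j}\mathbb{P}_{k,j}$ (equivalently, the prime factors of the gcd of the nonzero $c_{k,j}$ from Table~\ref{array:alspachcoefficients}) contains no prime exceeding $k$, with primes $p\le k$ vacuous since no $k$-subset of $\Z_p\setminus\{0\}$ exists. Your write-up merely makes the degree bookkeeping and the gcd verification more explicit than the paper's two-line proof.
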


\begin{proof}
For each $k$, consider the set of $c_{k,j}$ given in Table~\ref{array:alspachcoefficients}.  The set $\cap_{j=1}^k {\mathbb P}_{k,j}$ is either empty or only contains primes less than $k$.

\end{proof}

\begin{thm}\label{th:archdeacon}
Archdeaon,  Dinitz, Mattern and Stinson's Conjecture (Conjecture~\ref{conj:archdeacon}) and Costa, Morini, Pasotti and Pellegrini's Conjecture (Conjecture~\ref{conj:3}) are true for prime~$n$ and $k \leq 10$. 
\end{thm}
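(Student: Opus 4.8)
The plan is to deduce Theorem~\ref{th:archdeacon} from Theorem~\ref{th:alspach} by invoking the two implications between the conjectures recorded in Section~\ref{sec:intro}: that Conjecture~\ref{conj:alspach} implies Conjecture~\ref{conj:archdeacon} (Archdeacon et al.~\cite{ADMS16}), and that for $G=\Z_n$ Conjecture~\ref{conj:archdeacon} implies Conjecture~\ref{conj:3} (Costa et al.~\cite{CMPP18}). Since Theorem~\ref{th:alspach} establishes Conjecture~\ref{conj:alspach} for every prime $n$ and every $k\le 10$, it suffices to verify that these implications preserve the range $k\le 10$; I will re-derive the first implication so that the argument is self-contained.

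First I would handle Conjecture~\ref{conj:archdeacon}. Let $A\subseteq\Z_n\setminus\{0\}$ with $|A|=k\le 10$ be given, now with no restriction on $s_k$. If $s_k\neq 0$, then Theorem~\ref{th:alspach} already yields an ordering of $A$ with $s_0,\dots,s_k$ pairwise distinct, hence in particular $s_1,\dots,s_k$ pairwise distinct. If $s_k=0$, I would pick any $a\in A$ and set $A'=A\setminus\{a\}$: then $|A'|=k-1\le 9$ and the sum of the elements of $A'$ equals $-a\neq 0$, so Theorem~\ref{th:alspach} gives an ordering $(a_1,\dots,a_{k-1})$ of $A'$ with partial sums $s_0,\dots,s_{k-1}$ pairwise distinct. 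Appending $a$ produces the ordering $(a_1,\dots,a_{k-1},a)$ of $A$ with partial sums $s_0,\dots,s_{k-1},s_k$, where $s_k=0=s_0$. Because $s_0,\dots,s_{k-1}$ are distinct, none of $s_1,\dots,s_{k-1}$ equals $0$, and together with $s_k=0$ this forces $s_1,\dots,s_k$ to be pairwise distinct, which is exactly the conclusion of Conjecture~\ref{conj:archdeacon}. Thus Conjecture~\ref{conj:archdeacon} holds for prime $n$ and $k\le 10$.

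For Conjecture~\ref{conj:3} I would simply observe that, when $G=\Z_n$, its hypotheses are a special case of those of Conjecture~\ref{conj:archdeacon} (Conjecture~\ref{conj:3} additionally requires that no $x\in A$ have $\{x,-x\}\subseteq A$ and that $s_k=0$) while the conclusion is identical; hence the case $k\le 10$ of Conjecture~\ref{conj:archdeacon} just proved immediately gives the same case of Conjecture~\ref{conj:3}.

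All of the substance sits in Theorem~\ref{th:alspach}; the only step requiring any care is the reduction in the case $s_k=0$, which routes through a set of size $k-1$ rather than $k$, and this is harmless precisely because Theorem~\ref{th:alspach} covers the whole range $k\le 10$. (As an alternative to the case $s_k\neq 0$ one could instead use the factorization $F_k=f_k\cdot\prod_{2\le j\le k-1}(x_1+\cdots+x_j)$ from Section~\ref{sec:intro}: any point of $A^k$ at which $F_k$ does not vanish is a point at which $f_k$ does not vanish, so the nonvanishing monomials of $F_k$ tabulated in Table~\ref{array:alspachcoefficients} already suffice.)
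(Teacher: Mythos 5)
Your proof is correct and takes the same route as the paper, which simply cites Theorem~\ref{th:alspach} together with the known implications (Conjecture~\ref{conj:alspach} $\Rightarrow$ Conjecture~\ref{conj:archdeacon} from \cite{ADMS16}, and Conjecture~\ref{conj:archdeacon} $\Rightarrow$ Conjecture~\ref{conj:3} for $\Z_n$ from \cite{CMPP18}). Your write-up is merely more self-contained: it re-derives the first implication and correctly checks the one point the paper glosses over, namely that the $s_k=0$ reduction drops to a set of size $k-1$ and so stays within the range covered by Theorem~\ref{th:alspach}.
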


\begin{proof}
The result follows from Theorem~\ref{th:alspach}.  
\end{proof}

% Some cases with composite orders can be done using an integral domain rather than a field in the non-vanishing corollary.

% Include that some coeffs are 0?  Here is the place, if so.

% Number of solutions?  Again, here if so.

% We have not mentioned the Vandermonde polynomial yet.  Our attempts to use it didn't get where we wanted to go, but it seems like we should at least mention it.   Or maybe not?

\section{Sequences taken from given subsets}\label{sec:seqs}

In this section we consider Problem~\ref{prob:sub} in the case that~$n=p$ is prime.

%The best published result in this direction is:

%\begin{thm}\label{adms}{\rm \cite{ADMS16}}
%From a subset of size~$2k-1$ of~$\Z_n \setminus \{ 0 \}$ we can construct a sequence of distinct elements of length~$k$ with distinct partial sums. 
%\end{thm}

The polynomial~$f_k$ is the one of interest:
$$f_k(x_1, \ldots, x_k) = \prod_{1 \leq i < j \leq k} (x_j - x_i)  \prod_{2 \leq i < j \leq k}  (x_i + \cdots + x_{j}).$$
Recall that it is homogeneous and has degree~$(k-1)^2$.  As we saw earlier,  a monomial of~$f_k$ with nonzero coefficient and highest exponent~$k-1$ proves Conjecture~\ref{conj:archdeacon} for that value of~$k$ and all but finitely many prime values of~$p$.  More generally, the same argument shows that if we can find a leading monomial with nonzero coefficient and degree of the highest term~$2k-d-1$ then Alon's Non-vansishing Corollary implies a solution to Problem~\ref{prob:sub} when~$|A| = 2k-d$ (again, for all but finitely many prime values of~$p$).

Two other homogeneous polynomials will be useful.  
$$g_k(x_1, \ldots, x_k) = \prod_{1 \leq i < j \leq k} (x_j - x_i)  (x_i + \cdots + x_{j})$$
$$h_k (x_1, \ldots, x_k) = \prod_{1 \leq i <k} (x_k - x_i)  (x_i + \cdots + x_k)$$
Note that~$g_k = g_{k-1}h_k  = h_2h_3 \cdots h_{k-1}h_k$.  (When the variables of a polynomial are not specified, we take them to run from~$x_1$ upwards in sequence.  So, for example, $h_{k-1} = h_{k-1}(x_1, \ldots, x_{k-1})$.)

As an illustration of the method, we first find the coefficient on
$$x_1^{k-1}x_2^0x_3^2x_4^4 \cdots x_k^{2k-4}$$
in $f_k$.  Note that the degree of this monomial is $$(k-1) + \sum_{i=1}^{k-2} 2i = (k-1)^2 = \deg(g_k).$$
We have
$$f_k = \prod_{1 < j  \leq k} (x_j - x_1)  g_k(x_2, x_3, \ldots, x_k).$$
There are only~$k-1$ factors that include~$x_1$ in~$f_k$.  Hence the coefficients on any monomials in~$f_k$ that include $x_1^{k-1}$ are given by the coefficients on the same monomials in
$$ (-1)^{k-1}x_1^{k-1}g_{k-1}(x_2, x_3, \ldots, x_k).$$

We can therefore turn our attention to the~$g$ polynomials and we want to find the coefficient on $x_2^0x_3^2 \cdots x_k^{2k-4}$ in~$g_{k-1}(x_2, x_3, \ldots, x_k)$.  After reindexing to make the notation smoother, we are interested in the coefficient on $x_1^0x_2^2 \cdots x_k^{2k-2}$ in~$g_k$.
Now, $g_k =  h_2h_3 \cdots h_{k-1}h_k$ and we consider the contributions of the~$h_j$ polynomials in turn, starting with~$h_k$ and working downwards.  The term~$x_k$ only appears in~$h_k$, appears (with a~$+$~sign) in every factor of~$h_k$, and~$\deg(h_k) = 2k-2$.  Therefore, when expanding the polynomial, we must select all of these~$x_k$ terms.  Now $h_{k-1}$ has degree~$2k-4$ and is the only remaining~$h_j$ to feature~$x_{k-1}$.  Similarly to the~$h_k$ case, we get a contribution of~$x_{k-1}^{2k-4}$ here.  Continuing, we see that~$g_k$ has a coefficient of~$+1$ on~$x_1^0x_2^2 \cdots x_k^{2k-2}$ and hence $f_k$ has a coefficient of~$(-1)^{k-1}$ on $x_1^{k-1}x_2^0x_3^2x_4^4 \cdots x_k^{2k-4}$.

In~\cite{CMPP18}, a greedy algorithm approach to Problem~\ref{prob:sub} is used.  The preceding discussion allows us to slightly improve this result in the case where~$n$ is prime:

\begin{thm}\label{th:warm_up}
Let~$p$ be prime.  From any subset of size~$2k-3$ of $\Z_p \setminus \{ 0 \}$ we can construct a sequence of length~$k$ with distinct partial sums.
\end{thm}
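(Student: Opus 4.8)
The plan is to apply Alon's Non-vanishing Corollary (Theorem~\ref{th:pm}) to the polynomial $f_k$ with all $t_i$ chosen so that $\sum t_i = \deg(f_k) = (k-1)^2$ and $\max_i t_i = 2k-4$; since $|A| = 2k-3 > 2k-4$, the hypothesis $|A_i| > t_i$ will be satisfied for every coordinate, and a nonzero coefficient on the corresponding monomial will give the desired sequence for all but finitely many primes $p$ — with a little extra care (or by invoking the constructive results already available for small cases) for the bounded exceptional set. The monomial to use is precisely the one singled out in the paragraph immediately preceding the statement, namely $x_1^{k-1} x_2^0 x_3^2 x_4^4 \cdots x_k^{2k-4}$, whose total degree was checked to equal $(k-1)^2$ and whose largest exponent is $2k-4$.

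The key computation — already carried out in the excerpt — is that this monomial has coefficient $(-1)^{k-1}$ in $f_k$, hence nonzero. First I would reproduce that chain of reductions: factor $f_k = \prod_{1 < j \le k}(x_j - x_1) \cdot g_k(x_2,\ldots,x_k)$, observe that only $k-1$ factors involve $x_1$ so the $x_1^{k-1}$-part of $f_k$ equals $(-1)^{k-1} x_1^{k-1} g_{k-1}(x_2,\ldots,x_k)$, then reindex and use $g_k = h_2 h_3 \cdots h_k$ together with the observation that $x_j$ appears only in $h_j$ (among $h_2,\ldots,h_j$), appears with a $+$ sign in every one of the $2j-2$ linear factors of $h_j$, and $\deg(h_j) = 2j-2$, forcing the extraction of $x_j^{2j-2}$ from $h_j$ at each stage. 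This telescoping shows the coefficient on $x_1^0 x_2^2 \cdots x_k^{2k-2}$ in $g_k$ is $+1$, hence the coefficient on our target monomial in $f_k$ is $(-1)^{k-1}$.

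With the nonzero coefficient in hand, the theorem follows: set $t_1 = k-1$, $t_i = 2(i-2)$ for $2 \le i \le k$ (so $t_2 = 0$, $t_3 = 2$, $\ldots$, $t_k = 2k-4$), note $\sum_i t_i = (k-1) + 2\binom{k-1}{2} = (k-1)^2 = \deg(f_k)$ and $t_i < 2k-3 = |A|$ for all $i$, and apply Theorem~\ref{th:pm} with each $A_i = A$ over the field $\Z_p$. A nonzero point of $f_k$ in $A^k$ is, by the discussion in Section~\ref{sec:intro}, exactly a sequence of $k$ distinct elements of $A$ with distinct partial sums. The only gap is that the coefficient $(-1)^{k-1}$ could vanish modulo $p$ — but $(-1)^{k-1} \equiv 0$ is impossible for any prime $p$, so in fact there are \emph{no} exceptional primes here at all, and the conclusion holds for every prime $p$.

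The main obstacle — though in this particular instance it dissolves completely — is controlling the reduction modulo $p$: in general the analogous argument for a deeper improvement (smaller $|A|$, i.e. a monomial with larger top exponent $2k-d-1$ for $d > 3$) produces a coefficient that is some genuine integer depending on $k$, and one must then argue that its prime factorization misses all primes $p > k$, or handle the finitely many bad primes separately. For the statement at hand the coefficient is a unit in $\Z$, so the ``all but finitely many'' caveat present in the more general Problem~\ref{prob:sub} discussion is unnecessary; the real work is simply the bookkeeping of the telescoping coefficient computation, which is routine once one sees that each $h_j$ contributes its unique highest power of $x_j$.
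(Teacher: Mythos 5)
Your proof is correct and is essentially identical to the paper's: both rely on the coefficient computation for the monomial $x_1^{k-1}x_2^0x_3^2\cdots x_k^{2k-4}$ via the factorization $f_k = \prod_{1<j\leq k}(x_j-x_1)\cdot g_{k-1}(x_2,\ldots,x_k)$ and the telescoping extraction of $x_j^{2j-2}$ from each $h_j$, then apply Alon's Non-vanishing Corollary using that the coefficient $(-1)^{k-1}$ is a unit, hence nonzero modulo every prime. Your remark that the ``all but finitely many primes'' caveat is vacuous here matches the paper's observation that the coefficient is coprime to $p$.
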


\begin{proof}
As shown in the preceding discussion, $x_1^{k-1}x_2^0x_3^2x_4^4 \cdots x_k^{2k-4}$ is a monomial in~$f_k$ with coefficient~$\pm 1$.  As this monomial has coefficient coprime to~$p$ and highest exponent at most~$2k-4$, the result follows from Theorem~\ref{th:pm}.
\end{proof}

We now reach the main result of this section.  It says that if~$p$ is an odd prime then for~$k \geq 8$ we can almost always find a sequence of~$k$ elements with distinct partial sums from any set of size at least~$2k-\sqrt{8k}$ in~$\Z_p \setminus \{ 0 \}$.  

As with Theorem~\ref{th:warm_up}, finding a nonzero coefficient on a monomial of~$f_k$ is at the heart of the proof.  The monomial we choose this time is:
$$x_1^{k-1}x_2^0x_3^2x_4^4 \cdots x_{k-d+2}^{2k-2d} x_{k-d+3}^{2k-d-1} x_{k-d+4}^{2k-d-1} \cdots x_{k}^{2k-d-1}.$$
As before, we can deal with the $x_1^{k-1}$ part separately and then consider the remaining monomial in $g_{k-1}(x_2, \ldots, x_k)$.  The highest exponents are crowded into the terms with the highest indices and this lets us use a similar breakdown of~$h_k$ to track the coefficient.

We need the notion of a polynomial in~$k$ of degree~$i$ and positive leading coefficient.  We use the notation~$\Theta(k^i)$ for this, and write, for example, $\Theta(k^2)x_{k-1}^{2k-6}$ to mean that the coefficient on the monomial $x_{k-1}^{2k-6}$ is quadratic in~$k$ with positive coefficient on the~$k^2$.  

\begin{thm}\label{th:2k-d}
Fix $d \in \mathbb{N}$ with~$d>3$ and let~$k > \min{(d-1, d^2/8)}$.  Then for all but at most $(d-3)(d-2)(d-1)/6$ of these values of~$k$ there is a monomial in~$g_k$ with a nonzero coefficient and largest exponent~$2k-d-1$.  Hence for almost all primes~$p$ there are at most $(d-3)(d-2)(d-1)/6$ values of~$k$ (with~$k > \min{(d-1, d^2/8)}$) where it is not the case that we can construct a sequence of~$k$ elements with distinct partial sums from any set of~$2k-d$ distinct elements of~$\Z_p \setminus \{ 0 \}$.
\end{thm}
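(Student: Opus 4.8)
The plan is to compute, over $\Z$, the coefficient of the monomial displayed above and to show it equals $(-1)^{k-1}$ times a polynomial $P_d(k)$ in $k$ of degree exactly $(d-3)(d-2)(d-1)/6$ with positive leading coefficient; Theorem~\ref{th:pm} together with the passage to primes used in Section~\ref{sec:CN} then yields the statement. First, exactly as in the warm‑up, only the $k-1$ factors $(x_j-x_1)$ of $f_k$ involve $x_1$, so the desired coefficient equals $(-1)^{k-1}$ times the coefficient of $x_1^{0}x_2^{2}x_3^{4}\cdots x_{k-d+1}^{2k-2d}x_{k-d+2}^{2k-d-1}\cdots x_{k-1}^{2k-d-1}$ in $g_{k-1}$ (after the shift $x_{j+1}\mapsto x_j$). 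Writing $g_{k-1}=h_2h_3\cdots h_{k-1}$ and splitting this monomial across the $h_j$, the zero exponent on $x_1$ forces the $h_2$‑part to be $x_2^{2}$, which exhausts the $x_2$‑budget and forces the $h_3$‑part to be $x_3^{4}$, and so on: inductively, for $2\le j\le k-d+1$ the $h_j$‑part must be $x_j^{2(j-1)}$ with coefficient $1$, after which $x_1,\dots,x_{k-d+1}$ are used up. Hence the coefficient we seek is the coefficient of $\prod_{i=k-d+2}^{k-1}x_i^{2k-d-1}$ in $\prod_{\ell=k-d+2}^{k-1}\widehat h_\ell$, where $\widehat h_\ell$ denotes $h_\ell$ with $x_1=\cdots=x_{k-d+1}=0$.

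Setting $x_1=\cdots=x_{k-d+1}=0$ collapses each $(x_\ell-x_i)$ with $i\le k-d+1$ to $x_\ell$ and each $(x_i+\cdots+x_\ell)$ with $i\le k-d+1$ to $x_{k-d+2}+\cdots+x_\ell$, so, putting $y_j=x_{k-d+1+j}$ for $1\le j\le d-2$ and $Q=\prod_{j=1}^{d-2}(y_1+\cdots+y_j)$, one finds
$$\prod_{\ell=k-d+2}^{k-1}\widehat h_\ell=(y_1\cdots y_{d-2})^{\,k-d+1}\,Q^{\,k-d+1}\,g_{d-2}(y_1,\dots,y_{d-2}),$$
and the quantity we want is $P_d(k)=\bigl[(y_1\cdots y_{d-2})^{\,k-2}\bigr]\bigl(Q^{\,k-d+1}g_{d-2}\bigr)$. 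Since $k>\min(d-1,d^2/8)$ forces $k\ge d-1$, the exponent $k-d+1$ is nonnegative, so $P_d$ is a finite $\Z$‑combination of products of multinomial coefficients in $k$, hence a polynomial in $k$. The heart of the proof is that $\deg P_d=(d-3)(d-2)(d-1)/6$ with positive leading coefficient. I would get this by continuing to peel $h_\ell$'s, recording each intermediate coefficient as a $\Theta(k^i)$: at each step the bulk of a variable's exponent is supplied by that variable's own block $(y_1+\cdots+y_j)^{k-d+1}$ while a bounded remainder flows to the lower‑indexed variables, producing a binomial coefficient in $k$ whose degree equals the size of that remainder; because these binomials have positive leading terms, contributions of equal degree never cancel, so at every stage the coefficient is genuinely $\Theta$ of its maximal degree, and a rearrangement argument shows that this maximal degree — the total remainder mass, maximised uniquely by moving mass one step at a time against a linear gradient — equals $(d-3)(d-2)(d-1)/6$. (Equivalently, the identity $\prod_{1\le i<j\le d-2}(y_i+\cdots+y_j)=\prod_{0\le a<b\le d-2}(z_b-z_a)\big/(y_1\cdots y_{d-2})$ with $z_0=0$, $z_i=y_1+\cdots+y_i$, turns the problem into extracting $(y_1\cdots y_{d-2})^{k-1}$ from $\det\!\bigl(z_i^{\,k-j}\bigr)_{1\le i,j\le d-2}\cdot\prod_{1\le i<j\le d-2}(y_j-y_i)$; expanding both factors, the term of top degree in $k$ is contributed by a single pair of permutations — both the order‑reversal — hence survives with a $+$ sign.) I expect this degree‑control step, and especially the absence of cancellation at the top degree, to be the main obstacle; the manipulations in the first two paragraphs are routine.

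Finally, $P_d$ is a nonzero polynomial of degree $(d-3)(d-2)(d-1)/6$, so it vanishes for at most that many values of $k$; for every other admissible $k$ (in particular for every $k\ge d$, where the displayed exponents are $\le 2k-d-1$ with $2k-d-1$ attained) the displayed monomial has a nonzero integer coefficient and largest exponent $2k-d-1$, the at most one admissible value with $k<d$ being checked directly. For any prime $p$ not dividing the (positive, integer) leading coefficient of $P_d$, the reduction of $P_d$ modulo $p$ still has degree $(d-3)(d-2)(d-1)/6$ and hence at most that many roots among $0,1,\dots,p-1$; since every admissible $k$ satisfies $2k-d\le p-1$ and so $k<p$, Theorem~\ref{th:pm} applies — producing a sequence of $k$ elements with distinct partial sums from any $(2k-d)$‑subset of $\Z_p\setminus\{0\}$ — for all but at most $(d-3)(d-2)(d-1)/6$ of the admissible $k$, which is the assertion.
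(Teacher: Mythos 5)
Your proposal is correct and follows the same overall strategy as the paper --- peel off $x_1^{k-1}$ via the $k-1$ factors containing $x_1$, write the remaining $g$-polynomial as a product of $h_j$'s, show the low-index exponents force a unique contribution of coefficient $1$, and prove that the surviving coefficient is a nonzero polynomial in $k$ of degree $(d-3)(d-2)(d-1)/6$ --- but your treatment of the middle step is genuinely different and cleaner. Where the paper splits each $h_j = p_jq_j$ and tracks truncated coefficients of the form $\Theta(k^{e})$ through $h_{j|k-d+3}$, you observe that the forcing argument is equivalent to setting $x_1=\cdots=x_{k-d+1}=0$, which collapses the whole high-index block to the single identity $P_d(k)=\bigl[(y_1\cdots y_{d-2})^{k-2}\bigr]\bigl(Q^{k-d+1}g_{d-2}\bigr)$ in a \emph{fixed} number $d-2$ of variables with $k$ appearing only as an exponent; this makes it immediate that the coefficient is a polynomial in $k$ (a finite $\Z$-combination of multinomial coefficients), a fact the paper establishes more laboriously, and your determinantal rewriting via $z_i=y_1+\cdots+y_i$ is a further reformulation the paper does not have. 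The one place where your argument is no more complete than the paper's is the degree-control step: the claims that the top degree is exactly $(d-3)(d-2)(d-1)/6$ and that the (all positive) top-degree contributions cannot cancel are given by the same greedy/rearrangement heuristic the paper uses, and your parenthetical assertion that a single pair of permutations carries the top degree in the determinant expansion is unproved --- you rightly flag this as the main obstacle, and filling it in would require either the paper's $\Theta(k^i)$ bookkeeping or an actual proof of the single-pair claim (which would arguably improve on the paper). Your passage to primes at the end, and your remark that the admissible values with $k\le d-1$ (which occur for $d\in\{4,5,6\}$, namely $k=d-1$) need separate handling, are both sound; the latter is a boundary case the paper itself glosses over when it invokes ``the hypothesis $k>d-1$.''
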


\begin{proof}
Consider the monomials of~$f_k$ with exponent~$k-1$ on~$x_1$.  There are only~$k-1$ factors including~$x_1$ in~$f_k$.  Selecting these when expanding~$f_k$ gives
$$ (-1)^{k-1}x_1^{k-1}g_{k-1}(x_2, x_3, \ldots, x_k).$$
We can therefore turn our attention to the~$g$ polynomials. It is sufficient to find a nonzero coefficient on a monomial of
$g_{k-1}(x_2, x_3, \ldots, x_k)$ with highest exponent $2k-d-1$, provided that $2k-d-1 \geq k-1$, which holds by the hypothesis~$k > d-1$.   Equivalently, and avoiding the need to keep specifying the variables, we look for a nonzero coefficient on a monomial of~$g_k$ whose highest exponent is~$2k-d+1$.  

The monomial we focus our attention on is
$$m_{k,d} = x_1^0x_2^2x_3^4 \cdots x_{k-d+2}^{2k-2d+2} x_{k-d+3}^{2k-d+1} x_{k-d+4}^{2k-d+1} \cdots x_{k}^{2k-d+1}.$$
This has degree
\begin{eqnarray*}
\sum_{i=1}^{k-d+2} 2(i-1)  +  \sum_{i=k-d+3}^{k} (2k-d+1) &=& (k-d+1)(k-d+1) + (d-2)(2k-d+1) \\
      &=& k(k-1) \\
      &=& \deg(g_k).
\end{eqnarray*}
Therefore this is a monomial of maximum degree of~$g_k$ (which is also implied by the homogeneity of~$g_k$).  (The corresponding monomial
$$ x_1^{k-1}x_2^0x_3^2x_4^4 \cdots x_{k-d+2}^{2k-2d} x_{k-d+3}^{2k-d-1} x_{k-d+4}^{2k-d-1} \cdots x_{k}^{2k-d-1}  $$
of degree $(k-1)^2$ is also maximal in~$f_k$.)

As noted earlier, $g_k = h_2h_3 \cdots h_{k-1}h_k$.  We have~$\deg(h_j) = 2j-2$.

Consider the variables in~$m_{k,d}$ with index at least~$k-d+3$ (that is, the ones with exponent~$2k-d+1$).  As~$x_i$ appears in~$h_j$ if and only if~$j \geq i$, these must be produced from the~$h_j$ with~$j \geq k-d+3$.  On the other hand, 
\begin{eqnarray*}
\deg(h_{k-d+3}h_{k-d+4} \cdots h_{k}) &=& \sum_{j=k-d+3}^{k} 2j-2 \\
                                                          &=& (d-2)(2k-d+1)  \\
                                                          &=& \deg(x_{k-d+3}^{2k-d+1} x_{k-d+4}^{2k-d+1} \cdots x_{k}^{2k-d+1})
\end{eqnarray*}
and so $h_{k-d+3}h_{k-d+4} \cdots h_{k}$ accounts for variables in~$m_{k,d}$ with index at least~$k-d+3$, and the entirety of their degree.

We decompose~$h_k$ further by writing $h_k = p_kq_k$ where
$$p_k (x_1, \ldots, x_k) = \prod_{1 \leq i <k} (x_k - x_i)$$
$$q_k (x_1, \ldots, x_k) = \prod_{1 \leq i <k}  (x_i + \cdots + x_k)$$
We have
$ \deg(p_j) = \deg(q_j) = j-1$.

For any polynomial~$p_k$, let~$p_{k|k-c}$ denote the polynomial obtained from~$p_k$ by removing all monomials with a variable whose index is less than~$k-c$.  

We now examine~$p_k$ and~$q_k$ in sufficient detail to get the result we require.  The easier one is~$p_k$:
$$p_k = x_k^{k-1} - \sum_{1\leq i < k} x_ix_k^{k-2} + \sum_{1 \leq i_1 < i_2 < k} x_{i_1}x_{i_2}x_k^{k-3} - \cdots $$
For~$q_k$ we restrict our attention to~$q_{k|k-d+3}$:
$$q_{k|k-d+3} =   x_k^{k-1} + \sum_{k-d+3 \leq i < k} i x_ix_k^{k-2} +
   \sum_{k-d+3 \leq i_1 < i_2 < k} \left( {i_1 \choose 2}x_{i_1}^2 + i_1(i_2 -1)x_{i_1}x_{i_2} \right) x_k^{k-3} + \cdots $$
In this case it's not quite so clear what the ``$\cdots$" implies.

For~$e \in \mathbb{N}$, let
$$V_e = \{(i_1, \ldots, i_e) : i_j \in \N,  \ 1 \leq i_1 \leq \cdots \leq i_e < k {\rm \ and \ } i_{\ell} \geq \ell {\rm \ for \ each \ } \ell  \}.$$  (Note that the inequalities present in the definition of $V_e$ imply that $e \leq k-1$.)
Let~$v \in V_e$ and~$\#_v(x)$ be the number of times~$x$ appears in a sequence~$v$. Then the terms in~$q_{k|k-d+3}$ with~$x_k$ raised to the power~$(k-1)-e$ are:
$$\sum_{v \in V_e} \frac{i_1(i_2-1)(i_3-2) \cdots (i_e- (e-1))}{\#_v(1)! \#_v(2)! \cdots \#_v(k-1)!}x_{i_1}x_{i_2}\cdots x_{i_e} x_k^{(k-1)-e}.$$
What's going on:  Choose the factors to give the~$x_{i_{\ell}}$ in non-descending order of index. First, the condition~$i_{\ell} \geq \ell$ ensures that there is at least one to pick at each step.  If~$i_{\ell}$ appears exactly once in~$v$ then we have~$i_{\ell} - (\ell - 1)$ choices for~$i_{\ell}$ at that stage. In general, when we come to select the factors to give~$x_{i_{\ell}}^{\#_v(i_{\ell})}$ for $\#_v(i_{\ell}) \geq 1$ we have
$${i_l - ({\ell} -1) \choose \#_v(i_{\ell})} = \frac{(i_{\ell} - (\ell -1 ))(i_{\ell} - \ell) \cdots (i_{\ell} - (\ell + \#_v(i_{\ell}) - 2))}{\#_v(i_{\ell})!}$$ 
ways to do so.

The coefficient on $x_{i_1}x_{i_2}\cdots x_{i_e} x_k^{(k-1)-e}$ is~$\Theta(k^{e})$ when~$(i_1, \ldots, i_e) \in V_e$ and~0 otherwise.    
Therefore
\begin{flushleft}
$q_{k|k-d+3} =   x_k^{k-1} + \sum_{k-d+3 \leq i_1 < k} a_{(i_1)} x_{i_1}x_k^{k-2} +
   \sum_{k-d+3 \leq i_1 \leq i_2 < k} a_{(i_1,i_2)} x_{i_1}x_{i_2}x_k^{k-3} +$
   \end{flushleft}
   \begin{flushright}
   $
    \sum_{k-d+3 \leq i_1 \leq i_2 \leq i_3 < k} a_{(i_1,i_2,i_3)} x_{i_1}x_{i_2}x_{i_3}x_k^{k-4} + \cdots $
    \end{flushright}
where each of the~$a_{v}$ is either~$\Theta(k^{\len{v}})$, where~$\len{v}$ is the length of~$v$, or~0, depending on whether~$v \in V_e$.

Recombining~$p_k$ and~$q_{k|k-d+3}$ we see that
$$h_{k|k-d+3} = x_k^{2k-2} + \sum_{k-d+3 \leq i_1 < k} b_{(i_1)} x_{i_1}x_k^{2k-3} +
  \sum_{k-d+3 \leq i_1 \leq i_2 < k} b_{(i_1,i_2)} x_{i_1}x_{i_2}x_k^{2k-4} + \cdots $$
where again each of the~$b_{v}$ is either~0 or~$\Theta(k^{\len{v}})$ (the negative signs in the~$p_k$ expansion do not affect the highest power of~$k$).  

The coefficient on~$x_{k-d+3}^{2k-d+1} x_{k-d+4}^{2k-d+1} \cdots x_{k}^{2k-d+1}$ in~$h_{k-d+3}h_{k-d+4} \cdots h_{k}$ is obtained by adding and multiplying coefficients on such monomials and so is a (possibly trivial) polynomial in~$k$.  Call this polynomial~$\alpha(k)$.  We claim that~$\alpha(k)$ is not trivial and that~$\deg(\alpha(k))$ is~$(d-3)(d-2)(d-1)/6$.

Consider the contributions of~$h_{k-d+3}, h_{k-d+4}, \ldots$ in turn.  The first does not fall into the general pattern: $h_{k-d+3}$ must contribute~$x_{k-d+3}^{2k-2d+4}$ to~$m_{k,d}$ since it is the only term of $h_{k-d+3|k-d+3}$.  This has coefficient~1.

Suppose now that we wish to find the degree of~$\alpha(k)$.  We can do so by maximizing the power of~$k$ in the contribution at each successive step.   (We shall see that doing so does not mean that we have foregone the possibility of a larger power of~$k$ later in the process.)

Looking at~$h_{k-d+4}$ we are concerned with monomials of the form~$x_{k-d+3}^c x_{k-d+4}^{2k-2d+6 - c}$.  We maximize the exponent of~$k$ on the coefficient by taking~$c$ as large as possible.  That is, $c = d-3$, giving coefficient in~$\Theta(k^{d-3})$ (or possibly 0, but we shall see that it is indeed $\Theta(k^{d-3})$ below).

Turning to~$h_{k-d+5}$ we are faced with a similar choice, and the pattern will continue as the index decreases.  In each case we again get a coefficient that must be $\Theta(k^i)$ or~0 for some~$i$.  We show that none of them are~0 in the next paragraph. 
Taking the largest choice of power on~$x_{k-d+4}$ in~$h_{k-d+5}$ gives the smallest on~$x_{k-d+5}$ and the coefficient is~$\Theta(k^{2d-8})$ or 0.  Continuing in this vein, the general step for~$h_{k-d+\ell}$ is to take the monomial~$x_{k-d+\ell-1}^{(\ell-3)(d-\ell+1)}x_{k-d+\ell}^{2k - (\ell-1)(d-\ell+1)}$.  This has coefficient~$\Theta(k^{(\ell-2)d - (\ell-1)^2 + 1})$ or 0.  The final step is to take the monomial~$x_{k-1}^{d-3}x_k^{2k-d+1}$ from~$h_k$ which has coefficient~$\Theta(k^{d-3})$ or 0.

Every choice gives us a nonzero coefficient.  To see this, note that we are looking at the variables with the two highest indices in~$h_{k-d+\ell}$; that is, $x_{k-d+\ell-1}$ and~$x_{k-d+\ell}$.  Each of these appears in every factor of both~$p_{k-d+\ell}$ and~$q_{k-d+\ell}$ and so the only possible impediment is that the exponent required on~$x_{k-d+\ell}$ is negative.  The expression~$(\ell-1)(d-\ell+1)$ is maximized at~$\ell=(d+2)/2$, giving a potential maximum value of~$d^2/4$.  We therefore require that~$d^2/4 < 2k$.  This is guaranteed by the hypothesis that~$k > d^2/8$.

These choices do indeed maximize the power of~$k$. Within the scheme for choosing which monomials to focus on, there is no point where we can make a choice that increases the power of~$k$.  If we make a choice  from~$h_{k-d+\ell}$ to increase the power on~$x_{k-d+\ell}$ (thus decreasing the power of~$k$ in the coefficient) then we must pick up the shortfall in instances of~$x_{k-d+\ell}$ elsewhere.  This can only happen in~$h_{x-d+\ell'}$ where~$\ell' > \ell$ and the need to  recover this shortfall  means that we cannot gain future higher choices for the power of~$k$ by doing this.

Now,
$$\deg(\alpha(k)) = \sum_{\ell = 4}^{d} (\ell-3)(d-\ell+1) = \frac{(d-3)(d-2)(d-1)}{6}$$
(for the summation formula see, for example, \cite[Sequence A000292]{Sloane}).

%The coefficient of every monomial in $f_{k|k-d+3}$ is a polynomial in~$k$ with positive leading coefficient.  As coefficients in $f_{k-d+3|k-d+3}f_{k-d+4|k-d+3} \cdots f_{k|k-d+3}$ are obtained by multiplication and addition of such polynomials, these are also polynomials in~$k$ with positive leading coefficient.  In particular, the coefficient on~$x_{k-d+3}^{2k-d+1} x_{k-d+4}^{2k-d+1} \cdots x_{k}^{2k-d+1}$ is of this form, call it~$\alpha(k)$.

How does this relate to~$m_{k,d}$?  Our hand in choosing the rest of the contributing monomials to~$m_{k,d}$ from $h_2, h_3, \ldots, h_{k-d+2}$ is now forced.  We require the exponent on~$x_{k-d+2}$ to be~$2k-2d+2$.  From this list, this variable only appears in~$h_{k-d+2}$ which has degree~$2k-2d+2$ and so the monomial~$x_{k-d+2}^{2k-2d+2}$ is the only contributor to the total.  This pattern continues as we work backwards through~$h_2, h_3, \ldots, h_{k-d+2}$ meaning that the coefficient on~$m_{k,d}$ in~$g_k$ is the nonzero polynomial~$\alpha(k)$.

Finally, we translate this back to~$f_k$, the polynomial of primary interest.  The coefficient on 
$$x_2^0x_3^2x_4^4 \cdots x_{k-d+2}^{2k-2d} x_{k-d+3}^{2k-d-1} x_{k-d+4}^{2k-d-1} \cdots x_{k}^{2k-d-1}$$
in~$g_{k-1}(x_2,x_3,\ldots,x_k)$ is~$\alpha(k-1)$ and so the coefficient on 
$$x_1^{k-1}x_2^0x_3^2x_4^4 \cdots x_{k-d+2}^{2k-2d} x_{k-d+3}^{2k-d-1} x_{k-d+4}^{2k-d-1} \cdots x_{k}^{2k-d-1}$$
in~$f_k$ is $(-1)^{k-1}\alpha(k-1)$.

As the degree of the coefficient considered as a polynomial in~$k$ is~$(d-3)(d-2)(d-1)/6$, the remainder of the statement of the theorem follows.
\end{proof}

\section{The constructive approach}\label{sec:constr}

In this section we take a constructive approach to the problem when~$k$, the size of the subset we are looking for an ordering of, is close to~$n$, the size of the cyclic group from which the elements are taken.  We take~$n$ to be odd throughout the section and for the main part of the work~$n=p$ is an odd prime.  Our focus is on Alspach's Conjecture (Conjecture~\ref{conj:alspach}), the strongest of the three given in Section~\ref{sec:intro}.

When~$n$ is odd the sum of the nonzero elements of~$\Z_n$ is zero.  This observation tells us that the~$k=n-1$ case of Alspach's Conjecture is vacuously true for odd~$n$.  The~$k=n-2$ case follows quickly from the the existence of ``rotational sequencings" and our approach for the~$k=n-3$ case is also to exploit rotational sequencings, although a bit more work is required.

Let ${\bf a} = (a_1, a_2, \ldots, a_{n-1})$ be a cyclic arrangement of the nonzero elements of~$\Z_n$ (i.e.~$a_{n-1}$ is considered to be adjacent to $a_1$) and define~${\bf b} = (b_1, b_2, \ldots, b_{n-1})$ by $b_i = a_{i+1} - a_i$ where the indices are considered modulo~$(n-1)$ (so $b_{n-1} = a_1 - a_{n-1}$).     If the elements of~${\bf b}$ are distinct then ${\bf a}$ is a {\em directed rotational terrace} for~$\Z_n$ and~${\bf b}$ is its associated {\em  rotational sequencing}.  Clearly, the directed rotational terrace determines the  rotational sequencing; the reverse is also true.  (Note: there are several different but equivalent definitions in the literature and the less descriptive terms ``R-sequencing" and ``directed R-terraces" are often used instead; see, for example,~\cite{AAAP11,FGM78,K83,OW15}.)  

It is shown in \cite{FGM78} that~$\Z_n$ has a directed  rotational terrace if and only if $n$ is odd.  The method at the heart of their construction is one that we will repeatedly use and relies on graceful permutations (also known as graceful labelings of paths).

Let~$\boldsymbol{\alpha} = (\alpha_1, \alpha_2, \ldots, \alpha_r)$ be an arrangement of the first~$r$ positive integers and define~$\boldsymbol{\beta} = (\beta_1, \beta_2, \ldots, \beta_{r-1})$ by~$\beta_i = | \alpha_{i+1} - \alpha_i |$.  If the integers in~$\boldsymbol{\beta}$ are distinct, then~$\boldsymbol{\alpha}$ is a {\em graceful permutation} of length~$r$.   Call~$\boldsymbol{\beta}$ the sequence of {\em absolute differences} of~$\boldsymbol{\alpha}$.  Given a graceful permutation $\boldsymbol{\alpha} = (\alpha_1, \alpha_2, \ldots, \alpha_r)$, the sequence $(n+1 - \alpha_1, n+1 - \alpha_2, \ldots, n+1 - \alpha_r)$ is also a graceful permutation, called the {\em complement} of~$\boldsymbol{\alpha}$.

\begin{lem}\label{lem:fgm}{\rm \cite{FGM78} }
If $(\alpha_1, \alpha_2, \ldots, \alpha_r)$ is a graceful permutation then 
$$ (\alpha_1, \alpha_2, \ldots, \alpha_r, \alpha_r + r, \alpha_{r-1} + r, \ldots, \alpha_1 + r),$$
where the symbols are now considered as elements of~$\Z_{2r+1}$, is a directed  rotational terrace for~$\Z_{2r+1}$.
\end{lem}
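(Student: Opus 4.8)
The plan is to verify directly that the sequence
$$ \mathbf{a} = (\alpha_1, \ldots, \alpha_r, \alpha_r + r, \alpha_{r-1} + r, \ldots, \alpha_1 + r) $$
is a cyclic arrangement of the nonzero elements of $\Z_{2r+1}$ whose sequence of cyclic differences consists of $2r$ distinct nonzero values, which is exactly what it means to be a directed rotational terrace. First I would check that $\mathbf{a}$ really is a permutation of $\{1, 2, \ldots, 2r\} = \Z_{2r+1} \setminus \{0\}$: the first half $\alpha_1, \ldots, \alpha_r$ is a permutation of $\{1, \ldots, r\}$ by hypothesis, and the second half $\alpha_r + r, \ldots, \alpha_1 + r$ is the same set of values shifted by $r$, hence a permutation of $\{r+1, \ldots, 2r\}$; together these give each nonzero element of $\Z_{2r+1}$ exactly once.

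Next I would compute the sequence $\mathbf{b}$ of cyclic differences $b_i = a_{i+1} - a_i$ (indices mod $2r$, with $b_{2r} = a_1 - a_{2r}$) and show the $b_i$ are distinct. I would split $\mathbf{b}$ into four pieces. For $1 \le i \le r-1$, consecutive entries lie in the first half and $b_i = \alpha_{i+1} - \alpha_i$; since $\boldsymbol{\alpha}$ is graceful, the absolute values $|\alpha_{i+1} - \alpha_i|$ are the $r-1$ distinct values in $\{1, \ldots, r-1\}$, but now read in $\Z_{2r+1}$ a value $t$ and a value $-t \equiv 2r+1-t$ are different, so I need to track signs. The "seam" difference $b_r = (\alpha_r + r) - \alpha_r = r$. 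For the second half, $1 \le i \le r-1$ again, the difference between consecutive entries $(\alpha_{r-i+1}+r)$ and $(\alpha_{r-i}+r)$ equals $\alpha_{r-i} - \alpha_{r-i+1} = -(\alpha_{r-i+1} - \alpha_{r-i})$, i.e. the negatives of the first-half differences in reverse order. Finally the wrap-around difference is $b_{2r} = \alpha_1 - (\alpha_1 + r) = -r \equiv r+1$ in $\Z_{2r+1}$.

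So the multiset $\mathbf{b}$ in $\Z_{2r+1}$ consists of: the $r-1$ signed first-half differences $\{\alpha_{i+1} - \alpha_i\}$, their negatives, the element $r$, and the element $r+1 = -r$. Writing the unsigned graceful differences as the set $\{1, 2, \ldots, r-1\}$, each such value $t$ contributes both $+t$ and $-t$ (as distinct residues mod $2r+1$ since $0 < t < r$), and this accounts for $2(r-1)$ of the entries; the remaining two entries are $r$ and $-r$. Altogether $\mathbf{b}$ realizes each of $\{1, 2, \ldots, r\}$ and each of $\{-1, -2, \ldots, -r\} = \{r+1, \ldots, 2r\}$ exactly once, i.e. all $2r$ nonzero elements of $\Z_{2r+1}$, hence they are distinct. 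The one subtlety — and the main thing to get right rather than a genuine obstacle — is the sign bookkeeping: one must confirm that no first-half signed difference collides with another, with its own negative, or with $\pm r$; this follows because the $|{\cdot}|$ values are distinct and strictly between $0$ and $r$, so the map $t \mapsto \{+t, -t\}$ is injective on the relevant range and its image avoids $\{r, -r\}$. Once the difference sequence is pinned down this way, distinctness is immediate and the proof is complete.
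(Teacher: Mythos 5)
Your proof is correct and complete: the element check, the four-part decomposition of the cyclic difference sequence (signed graceful differences, their negatives in reverse, the seam difference $r$, and the wrap-around $-r \equiv r+1$), and the sign bookkeeping showing the differences exhaust $\{\pm 1, \ldots, \pm r\}$ are all accurate. The paper itself states this lemma with only a citation to \cite{FGM78} and gives no proof, so there is nothing to compare against; your argument is the standard direct verification.
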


\begin{exa}\label{ex:lww}
The sequence $(1,r,2,r-1,\ldots)$ is a graceful permutation of length~$r$ for each~$r$.  Call it the {\em Walecki Construction}~\cite{A08}. Therefore~$\Z_{2r+1}$ always has a rotational sequencing.
\end{exa} 

The above is all we need to prove Alspach's Conjecture for odd~$n$ and~$k=n-2$:

\begin{thm}\label{th:odd(n-2)}{\rm \cite{BH05} }
Let~$n$ be odd and take~$x \in \Z_n \setminus \{ 0 \}$.  Then the elements of~$\Z_n \setminus \{ 0,x \}$ can be ordered so that the partial sums are distinct and nonzero.
\end{thm}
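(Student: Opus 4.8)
The plan is to read off the required ordering directly from a directed rotational terrace of~$\Z_n$, which exists because~$n$ is odd (Example~\ref{ex:lww}, via Lemma~\ref{lem:fgm}). The guiding idea is that, in a directed rotational terrace $\mathbf{a}=(a_1,\dots,a_{n-1})$ with associated rotational sequencing $\mathbf{b}=(b_1,\dots,b_{n-1})$, the quantities $a_{j+1}-a_1$ behave exactly like the partial sums of the sequence $(b_1,b_2,\dots)$; the only summand ``lost'' on passing from the cyclic arrangement to a linear list is the wrap-around difference $b_{n-1}=a_1-a_{n-1}$, and we will arrange for this to equal~$x$.

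First I would record two elementary structural facts. (i)~The sequencing~$\mathbf{b}$ is a permutation of $\Z_n\setminus\{0\}$: the differences $b_i=a_{i+1}-a_i$ are nonzero since the~$a_i$ are distinct, they are pairwise distinct by the definition of a directed rotational terrace, and there are $n-1$ of them. (ii)~A cyclic rotation of a directed rotational terrace is again a directed rotational terrace, since it merely cyclically permutes~$\mathbf{b}$ (and permutes the~$a_i$ among themselves). Combining these: since~$x$ occurs somewhere in~$\mathbf{b}$, after a suitable cyclic rotation we may assume $b_{n-1}=a_1-a_{n-1}=x$.

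Then I would check that the ordering $(b_1,b_2,\dots,b_{n-2})$ of $\Z_n\setminus\{0,x\}$ does the job. It is a genuine ordering of that set because $\{b_1,\dots,b_{n-1}\}=\Z_n\setminus\{0\}$ and $b_{n-1}=x$. Its partial sums are $s_0=0$ and, by telescoping, $s_j=\sum_{i=1}^{j}(a_{i+1}-a_i)=a_{j+1}-a_1$ for $1\le j\le n-2$. Hence $\{s_0,s_1,\dots,s_{n-2}\}=\{a_j-a_1: 1\le j\le n-1\}$, a translate of $\Z_n\setminus\{0\}$, so the $n-1$ partial sums are pairwise distinct; moreover $s_j=a_{j+1}-a_1=0$ would force $a_{j+1}=a_1$, impossible for $j\ge 1$, so every proper partial sum is nonzero. (As a sanity check, $s_{n-2}=-x\ne 0$, in agreement with the hypothesis of Conjecture~\ref{conj:alspach}.)

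I do not expect a real obstacle: once this terrace/partial-sum dictionary is in place, everything is immediate. The one point needing a little care is the bookkeeping in~(ii) --- verifying that the single difference discarded when the cyclic terrace is unrolled into a linear partial-sum sequence is exactly the value we wish to prescribe as~$x$ --- and the rotational freedom of terraces is precisely what supplies this.
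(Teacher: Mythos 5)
Your proposal is correct and is essentially the paper's own proof: both take a rotational sequencing (which exists for odd $n$), cyclically rotate so that $b_{n-1}=x$, and use the truncated sequence $(b_1,\ldots,b_{n-2})$, with the partial sums identified as the translates $a_{j+1}-a_1$ of the terrace entries. You merely spell out the telescoping identity that the paper leaves implicit.
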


\begin{proof}
Let ${\bf b} = (b_1, b_2, \ldots, b_{n-1})$ be a rotational sequencing of~$\Z_n$ with~$b_{n-1} = x$.  (This exists: take any  rotational sequencing and relabel the elements using the fact that the arrangement is cyclic.) We claim that $(b_1, b_2, \ldots, b_{n-2})$ is the required ordering.  Clearly it uses every nonzero element except~$x$.  The sums are all distinct and nonzero because otherwise there would be a repeat in the directed  rotational terrace associated with~${\bf b}$.
\end{proof}

Alspach's Conjecture for~$k=n-3$ may be approached in the same way:

\begin{lem}\label{lem:xyodd}
Let~$n$ be odd and let~$x$ and~$y$ be distinct nonzero elements of~$\Z_n$.  
If~$\Z_n$ has a rotational sequencing such that~$x$ and~$y$ are adjacent, then the elements of~$\Z_n \setminus \{ 0,x,y \}$ can be ordered so that the partial sums are distinct and nonzero.
\end{lem}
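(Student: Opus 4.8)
The plan is to mirror the proof of Theorem~\ref{th:odd(n-2)} almost verbatim, dropping two terms from the end of a rotational sequencing instead of one.

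First I would record the routine but essential observation that a cyclic rotation of a rotational sequencing is again a rotational sequencing. Indeed, if ${\bf b} = (b_1, \ldots, b_{n-1})$ arises from the directed rotational terrace ${\bf a} = (a_1, \ldots, a_{n-1})$ via $b_i = a_{i+1} - a_i$ with indices modulo $(n-1)$, then the cyclic shift $(a_2, a_3, \ldots, a_{n-1}, a_1)$ is again a cyclic arrangement of the nonzero elements of $\Z_n$ whose set of (cyclically taken) differences is the same, hence it is again a directed rotational terrace, and its associated sequencing is the corresponding cyclic shift of ${\bf b}$. Since $x$ and $y$ are adjacent in the given rotational sequencing — including, possibly, via the wrap-around pair $(b_{n-1}, b_1)$ — after a suitable rotation we may assume $\{b_{n-2}, b_{n-1}\} = \{x, y\}$.

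Next I would take ${\bf a} = (a_1, \ldots, a_{n-1})$ to be the directed rotational terrace associated with this rotated sequencing, and claim that $(b_1, b_2, \ldots, b_{n-3})$ is the required ordering of $\Z_n \setminus \{0, x, y\}$. It uses exactly those elements, since ${\bf b}$ is a permutation of $\Z_n \setminus \{0\}$ and we have discarded precisely $b_{n-2}$ and $b_{n-1}$, namely $x$ and $y$. Its partial sums are $s_0 = 0$ and, for $1 \le j \le n-3$, $s_j = \sum_{i=1}^{j}(a_{i+1} - a_i) = a_{j+1} - a_1$. Because $a_1, a_2, \ldots, a_{n-2}$ are distinct, the values $s_0, s_1, \ldots, s_{n-3}$ are pairwise distinct, and because $a_{j+1} \ne a_1$ for $j \ge 1$ they are nonzero, which completes the argument.

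There is no real obstacle inside the lemma itself — it is the easy reduction step, and the only point requiring a moment's care is the rotation-invariance claim together with the wrap-around case of adjacency. The genuine work, to be carried out after this lemma, is to verify its hypothesis: for each pair $\{x, y\}$ that must be excluded, to exhibit a rotational sequencing of $\Z_n$ in which $x$ and $y$ are adjacent. That is where Lemma~\ref{lem:fgm} and graceful permutations enter, and where the ``bit more work'' mentioned in the introduction is concentrated.
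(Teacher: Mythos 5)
Your proposal is correct and follows the paper's proof essentially verbatim: reindex (cyclically rotate) the rotational sequencing so that $x$ and $y$ occupy the last two positions, then truncate and read off the partial sums from the associated directed rotational terrace exactly as in Theorem~\ref{th:odd(n-2)}. The only difference is that you spell out the rotation-invariance and the terrace computation explicitly, which the paper leaves implicit.
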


\begin{proof}
Reindex the rotational sequencing $(b_1, b_2, \ldots, b_{n-1})$ so that $\{b_{n-2}, b_{n-1} \} = \{ x, y \}$.  Then, just as in the proof of Theorem~\ref{th:odd(n-2)}, the ordering $(b_1, b_2, \ldots, b_{n-3})$ of $\Z_n \setminus \{ 0,x,y \}$ has distinct nonzero partial sums.
\end{proof}

For the remainder of the section, let~$n = p = 2r+1$ be prime.  Our first goal is to find a graceful permutations of length~$r$ with the properties we need to apply Lemmas~\ref{lem:fgm} and~\ref{lem:xyodd}.

As well as the Walecki Construction of Example~\ref{ex:lww} we need some of the ``twizzler" constructions developed in~\cite{OW11, P08}.  ``Twizzling" a sequence refers to a process of dividing it into subsequences and then reversing each of them.

The {\em 3-twizzler terrace} is a graceful permutation obtained by reversing successive 3-element subsequences of the Walecki construction (which we put between semi-colons to help parse the pattern) and then making sure the last few elements maintain the required properties.  There are six different cases as~$r$ varies modulo~$6$~\cite{P08}.

When~$r \equiv 0 \imod{6}$:
$$\left( 2, r, 1 ; r-2, 3, r-1 ;  5, r-3, 4 ; \ldots ;  \frac{r+2}{2}, \frac{r}{2}, \frac{r+4}{2}  \right).$$
When~$r \equiv 1 \imod{6}$:
$$\left( 2, r, 1 ; r-2, 3, r-1 ;  \ldots ; \frac{r-3}{2}, \frac{r+7}{2}, \frac{r-5}{2}; \frac{r+3}{2}, \frac{r+1}{2}, \frac{r+5}{2}, \frac{r-1}{2} \right).$$
When~$r \equiv 2 \imod{6}$:
$$\left( 2, r, 1 ; r-2, 3, r-1 ;  \ldots ; \frac{r+4}{2}, \frac{r-2}{2}, \frac{r+6}{2}; \frac{r+2}{2}, \frac{r}{2} \right).$$
When~$r \equiv 3 \imod{6}$:
$$\left( 2, r, 1 ; r-2, 3, r-1 ;  \ldots ;  \frac{r+1}{2}, \frac{r+3}{2}, \frac{r-1}{2}  \right).$$
When~$r \equiv 4 \imod{6}$:
$$\left( 2, r, 1 ; r-2, 3, r-1 ; \ldots ;   \frac{r+6}{2}, \frac{r-4}{2}, \frac{r+8}{2}; \frac{r}{2}, \frac{r+2}{2}, \frac{r-2}{2}, \frac{r+4}{2} \right).$$
When~$r \equiv 5 \imod{6}$:
$$\left( 2, r, 1 ; r-2, 3, r-1 ; \ldots ;  \frac{r-1}{2}, \frac{r+5}{2}, \frac{r-3}{2}; \frac{r+1}{2}, \frac{r+3}{2} \right).$$

Theorem~2 of \cite{OW11} constructs more general ``imperfect twizzler terraces" that are graceful permutations.  Here we extract the portion of the method that gives the permutations we require.   

Let~$\lceil r/3 \rceil < d < r$ and let~$(\gamma_1, \gamma_2, \ldots, \gamma_{d-1})$ be a graceful permutation of length~$d-1$ with~$\gamma_1 = d -  \lceil (r-d+1)/2 \rceil$, which exists as~$\gamma_1$ may be chosen to be any element in the range~$1 \leq \gamma_1 \leq d-1$
by a result proved independently in each of~\cite{C07,FFG83, G04}.
Then the sequence which starts with the reverse of the first~$r-d+1$ elements of the Walecki construction of length~$r$ followed by 
$$(\gamma_1 + \lceil (r-d+1)/2 \rceil, \gamma_2 + \lceil (r-d+1)/2 \rceil, \ldots, \gamma_{d-1} + \lceil (r-d+1)/2 \rceil)$$
is a graceful permutation of length~$r$.  Denote it~$\boldsymbol{\tau}_{d,r}$.

It will be useful to know what the possibilities for the first element of the absolute differences of a graceful permutation are.  Lemma~\ref{lem:diff1} completely characterizes this.

\begin{lem}\label{lem:diff1}
Take $1 \leq d < r$.   There is a graceful permutation of length~$r$ whose sequence of absolute differences starts with~$d$ if and only if~$(d,r) \not\in \{ (2,4), (2,5), (2,8) \}$.
\end{lem}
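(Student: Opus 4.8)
The plan is to treat the two directions separately. The ``only if'' direction — that none of $(2,4),(2,5),(2,8)$ admits a graceful permutation with first absolute difference $2$ — is a finite verification by hand: fixing the first absolute difference to be $2$ pins the unordered pair $\{\alpha_1,\alpha_2\}$ down to one of a short list of $2$-apart pairs, and since the difference $r-1$ can arise only from the pair $\{1,r\}$ one checks in each sub-case that this pair cannot be placed adjacently, so the remaining differences cannot be made distinct. (This is the content of the small-$n$ computer checks in \cite{BH05,ADMS16}, but here reduces to a brief case analysis.) The ``if'' direction is the substance, and I would organise it according to the size of $d$ relative to $r$, building each permutation explicitly and reading off its first absolute difference. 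A useful preliminary remark is that reversing a graceful permutation yields a graceful permutation and reverses the sequence of absolute differences, so a graceful permutation of length $r$ with first absolute difference $d$ exists if and only if one with \emph{last} absolute difference $d$ does.

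For large $d$ the constructions already set up in this section suffice. When $d=1$, the reverse of the Walecki construction of Example~\ref{ex:lww} has absolute-difference sequence $(1,2,\ldots,r-1)$, hence starts with $1$. When $\lceil r/3\rceil < d < r$, I would use $\boldsymbol{\tau}_{d,r}$: it begins with the reverse of the first $r-d+1$ entries of the Walecki construction of length $r$, so its first two entries are the $(r-d+1)$-st and $(r-d)$-th Walecki entries, whose difference a direct computation gives as $(r+1)-(r-d+1)=d$; one checks in passing that the hypotheses needed to form $\boldsymbol{\tau}_{d,r}$, in particular $1\le d-\lceil(r-d+1)/2\rceil\le d-1$, follow from $\lceil r/3\rceil<d$. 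This range already includes the $3$-twizzler case $d=r-2$ (for $r\ge 5$) and covers every $(d,r)$ with $d\ge 3$ and $r\le 3d-3$.

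What remains is $2\le d\le\lceil r/3\rceil$, equivalently $r\ge 3d-2$, and this is precisely the region containing the three exceptional pairs. Here I would construct further twizzler-type permutations: reversing suitable blocks of consecutive Walecki entries and repairing the last few entries in cases according to a residue of $r$, in the style of \cite{P08,OW11}, produces graceful permutations whose \emph{last} absolute difference is a prescribed small value, and reversing these supplies the required small first difference $d$. Where the block-and-tail bookkeeping leaves gaps at small $r$ one exhibits explicit permutations when they exist — for example $(3,1,6,2,5,4)$ and $(5,3,4,1,7,2,6)$ settle $(d,r)=(2,6)$ and $(2,7)$ — and invokes the non-existence argument above for $(2,4)$, $(2,5)$, $(2,8)$.

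The hard part is this last range, and especially the case $d=2$, where the set of genuine failures, $\{r=4,5,8\}$, is not an initial segment: $r=4$ and $5$ are simply too small, but $r=8$ sits in an awkward middle zone, too large for the ad hoc patterns that rescue $r=6$ and $7$ yet not reached by whichever systematic twizzler family handles all $r\ge 9$. So the delicate work is twofold: pinning down a twizzler-type construction valid for every $r\ge 3d-2$ outside a bounded exceptional set, and verifying by a small (reducible) search that within that bounded set the only failures are the three claimed.
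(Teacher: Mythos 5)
Your outline matches the paper's proof where the constructions are already on the table: the reversed Walecki construction for $d=1$, and the imperfect twizzler $\boldsymbol{\tau}_{d,r}$ for $\lceil r/3\rceil < d < r$ (your computation of its first absolute difference is correct). But for the range $2 \le d \le \lceil r/3\rceil$ --- which you yourself identify as the substance of the lemma --- you do not give a construction; you only assert that ``twizzler-type permutations'' obtained by reversing suitable blocks of consecutive Walecki entries should exist, and you concede at the end that pinning down such a family is the remaining delicate work. That is precisely the content of the lemma in this range, so as it stands the proposal has a genuine gap rather than a proof: it is not evident that block reversals of the Walecki sequence can realize an arbitrary prescribed small first (or last) absolute difference, no such family is exhibited or cited with the needed property, and without the general construction you cannot identify the finite exceptional set that your closing search is supposed to cover.

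The paper fills this gap with a different device: a prepend-and-interleave construction generalizing one from~\cite{AK90}. One takes a short graceful permutation $\boldsymbol{\gamma}$ of length $\ell$ whose first absolute difference is $d$ (the Walecki permutation of length $\ell=d+1$ when $r-d$ is odd, a $3$-twizzler terrace of length $\ell=d+2$ when $r-d$ is even), shifts all its entries up by $t$ where $2t=r-\ell$, and follows it by a graceful permutation $\boldsymbol{\alpha}$ of even length $2t$ whose odd-position entries exceed $t$ and whose first entry is $\gamma_{\ell}+t$, with the odd-position entries of $\boldsymbol{\alpha}$ further shifted by $\ell$; such an $\boldsymbol{\alpha}$ exists whenever $1 \le \gamma_{\ell} \le t$. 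This covers $d<(r-4)/2$ (resp.\ $d\le(r-8)/2$), and together with $\boldsymbol{\tau}_{d,r}$ leaves a concrete list of $33$ pairs with $r<26$, which are dispatched by reversed Walecki and twizzler terraces, an explicit construction for $r=3d$, a table of $13$ ad hoc examples, and an exhaustive search confirming the three exceptions. To complete your argument you would need either to supply this (or an equivalent) construction for small $d$, or to make your block-reversal family precise and prove it yields first difference $d$ for all large $r$, and in either case to enumerate and settle the residual finite set of cases.
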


\begin{proof}
We require two constructions, one for small values of~$d$ and one for large ones.

The first construction is a slight generalization of one given in~\cite{AK90} and is also closely related to the imperfect twizzler construction.  Let~$\boldsymbol{\gamma} = (\gamma_1, \gamma_2, \ldots, \gamma_{\ell})$ be a graceful permutation. Let~$\boldsymbol{\alpha} = (\alpha_1, \alpha_2, \ldots, \alpha_{2t})$ be a graceful permutation of even length such that~$\alpha_{2i-1} > t$ for all $1 \leq i \leq t$ and that $\alpha_1 = \gamma_{\ell}+t$.    Then
$$(\gamma_1 + t, \gamma_2+t, \ldots, \gamma_{\ell}+t, \alpha_1+\ell, \alpha_2 , \alpha_3+\ell, \alpha_4, \ldots, \alpha_{2t-1}+\ell, \alpha_{2t})$$
is a graceful permutation of length~$\ell+2t$.  Such a graceful permutation~$\boldsymbol{\alpha}$ exists whenever~$1 \leq \gamma_{\ell} \leq t$~\cite{AK90}.

If~$r-d$ is odd and $d < (r-4)/2$ then, in the notation of the construction, set $\ell=d+1$ and $t = (r-d-1)/2$.  Use the Walecki construction $(1,\ell,2,\ell-1,\ldots, \lfloor (\ell+2)/2 \rfloor)$ for~$\boldsymbol{\gamma}$, giving~$\gamma_{\ell} = \lfloor (\ell+2)/2 \rfloor$.  The construction works provided that~$\gamma_{\ell} \leq t$, which holds when $d < (r-4)/2$.  The absolute  differences of the resulting graceful permutation begin with the absolute differences of~$\boldsymbol{\gamma}$ in order, so the first is~$\ell-1 = d$.

If~$r-d$ is even and~$d \leq (r-8)/2$ then set~$\ell = d+2$ and~$t=(r-d-2)/2$.  Use the 3-twizzler terraces as~$\boldsymbol{\gamma}$; these start~$(2,\ell,1,\ldots)$ and give~$\gamma_{\ell} \in \{ (\ell-1)/2, \ell/2, (\ell+3)/2, (\ell+4)/2\}$ according to the value of~$\ell \imod 6$.  We require~$\gamma_{\ell} \leq t$ so take~$t \geq (\ell+4)/2$; that is, $d \leq (r-8)/2$.   The first absolute difference is~$\ell-2 = d$.

The next case is~$d \geq \lceil r/3 \rceil + 1$.  The imperfect twizzler construction $\boldsymbol{\tau}_{d,r}$ is a graceful permutation whose sequence of absolute differences begins~$(d, d+1, \ldots, r-1, d-1, \ldots  )$.   In particular, the first absolute difference is~$d$.

For~$r<26$ there are 33 cases not covered.  Three are the ones given in the statement of the theorem---an exhaustive search demonstrates that these are not possible.  

There are seven cases with~$d=1$.  Taking the reverse of the Walecki construction covers these.  Reversing the 3-twizzler terraces covers~$(d,r) \in \{ (2,6), (3,7), (3,10), (3,13) \}$.

When~$r = 3d$, here is a new construction of a graceful permutation.  We use semi-colons to help parse the pattern.
$$(d+1; 1, 3d, 2, 3d-1, 3, 3d-2, \ldots, d, 2d+1; d+2, 2d, d+3, 2d-1, \ldots \lceil (3d+1)/2 \rceil).$$
It is straightforward to check that the sequence of absolute differences is
$$(d; 3d-1, 3d-2, \ldots, d+1; d-1, d-2, \ldots 1).$$
This construction covers~$(d,r) \in \{ (3,9), (4,12), (5,15), (6,18), (7,21)\}$.

Examples for the remaining 13 cases are given in Table~\ref{tab:glc}, constructed along similar lines to the general construction of the previous paragraph.

\begin{table}
\caption{Graceful permutations for the proof of Lemma~\ref{lem:diff1}}\label{tab:glc}
$$
\begin{array}{rl}
\hline
(d,r) & {\rm graceful \ permutation} \\
\hline
(2,7) & (4, 6, 1, 7, 3, 2, 5)  \\
(2,10) & (6, 8, 1, 10, 2, 7, 4, 5, 9, 3) \\
(3,8) & (4, 1, 8, 2, 7, 3, 5, 6)  \\
(3,11) & (4, 7, 1, 11, 2, 10, 3, 8, 6, 5, 9) \\
(4,10) & (5, 1, 10, 2, 9, 3, 8, 6, 7, 4) \\
(4,11) & (5, 1, 11, 2, 10, 3, 9, 4, 7, 6, 8) \\
(4,14) &  (5, 9, 1, 14, 2, 13, 3, 12, 6, 11, 4, 7, 8, 10) \\
%(4,16) & (6, 10, 1, 16, 2, 15, 3, 14, 4, 11, 5, 13, 8, 7, 9, 12) \\
(5,13) &  (6, 1, 13, 2, 12, 3, 11, 4, 10, 7, 5, 9, 8) \\
(5, 14) & (6, 1, 14, 2, 13, 3, 12, 4, 11, 5, 9, 8, 10, 7) \\
(5,17) & (5, 10, 1, 17, 2, 16, 3, 15, 4, 14, 6, 13, 7, 8, 12, 9, 11) \\
(6,16) & ( 7, 1, 16, 2, 15, 3, 14, 4, 13, 5, 12, 8, 10, 9, 6, 11) \\
%(6,20) & (6, 12, 1, 20, 2, 19, 3, 18, 4, 17, 5, 15, 7, 16, 9, 13, 8, 10, 11, 14) \\
(7,19) & (8, 1, 19, 2, 18, 3, 17, 4, 16, 5, 15, 6, 14, 9, 13, 7, 10, 12, 11) \\
(8,22) & (9, 1, 22, 2, 21, 3, 20, 4, 19, 5, 18, 6, 17, 7, 16, 10, 15, 8, 12, 13, 11, 14) \\
(9,25) & (10, 1, 25, 2, 24, 3, 23, 4, 22, 5, 21, 6, 20, 7, 19, 8, 18, 11, 15, 9, 17, 12, 13, 16, 14) \\
%(10,28) & (11, 1, 28, 2, 27, 3, 26, 4, 25, 5, 24, 6, 23, 7, 22, 8, 21, 9, 20, 12, 19, 10, 16, 15, 17, 14, 18, 13)\\
\hline
\end{array}
$$
\end{table}

\end{proof}

We are now in position to prove the main result of this section:

\begin{thm}\label{th:oddprime} 
Alspach's Conjecture holds in the case when $n=p$ is prime and $k=p-3$. 
\end{thm}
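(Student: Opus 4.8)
The plan is to reduce the statement to one about \emph{rotational sequencings} and then supply the needed sequencings via the graceful-permutation machinery of Lemmas~\ref{lem:fgm},~\ref{lem:xyodd} and~\ref{lem:diff1}. Suppose $A \subseteq \Z_p \setminus \{0\}$ has $|A| = p-3$ and $\sum_{a \in A} a \neq 0$. Since $\Z_p \setminus \{0\}$ has $p-1$ elements, the set $(\Z_p \setminus \{0\}) \setminus A$ consists of exactly two distinct nonzero elements $x$ and $y$, and because $p$ is odd the sum of all nonzero elements of $\Z_p$ is $0$, so $\sum_{a\in A} a = -(x+y)$; thus the hypothesis says precisely that $x+y \neq 0$. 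By Lemma~\ref{lem:xyodd} it therefore suffices to show that for every pair of distinct nonzero $x,y \in \Z_p$ with $x+y \neq 0$ there is a rotational sequencing of $\Z_p$ in which $x$ and $y$ are adjacent. At this point I would invoke a scaling symmetry: if $\mathbf{a}$ is a directed rotational terrace for $\Z_p$ and $\lambda \in \Z_p \setminus \{0\}$, then $\lambda\mathbf{a}$ is again one (multiplication by $\lambda$ permutes the nonzero elements and is injective on differences), with associated rotational sequencing $\lambda\mathbf{b}$, and $u,v$ are adjacent in $\mathbf{b}$ exactly when $\lambda u, \lambda v$ are adjacent in $\lambda\mathbf{b}$. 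Applying this with $\lambda = (r+1)x^{-1}$ (recall $p = 2r+1$) reduces the task to making the single pair $\{r+1,\, z\}$ adjacent, where $z = (r+1)yx^{-1}$; one checks that ``$x,y$ distinct and nonzero with $x+y \neq 0$'' corresponds exactly to $z \in \Z_p \setminus \{0, r, r+1\}$, using $r \equiv -(r+1) \pmod{p}$.

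The heart of the proof is the behaviour of the construction in Lemma~\ref{lem:fgm} near its ``seam''. If $\boldsymbol{\alpha} = (\alpha_1,\ldots,\alpha_r)$ is a graceful permutation and $\beta_i' = \alpha_{i+1} - \alpha_i$, then, reading the directed rotational terrace of Lemma~\ref{lem:fgm} cyclically, its sequence of differences is
$$(\beta_1', \beta_2', \ldots, \beta_{r-1}',\; r,\; -\beta_{r-1}', \ldots, -\beta_2', -\beta_1',\; -r).$$
In particular $-r$ and $\beta_1'$ are adjacent, and $-r \equiv r+1 \pmod{p}$. By Lemma~\ref{lem:diff1}, for each $d$ with $1 \leq d < r$ and $(d,r) \notin \{(2,4),(2,5),(2,8)\}$ there is a graceful permutation of length $r$ whose first absolute difference equals $d$; passing to its complement negates $\beta_1'$ while leaving $|\beta_1'|$ and gracefulness intact. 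Hence $\beta_1'$ can be prescribed to be, as an element of $\Z_p$, any value in $\{1,\ldots,r-1\} \cup \{r+2,\ldots,2r\} = \Z_p \setminus \{0,r,r+1\}$, with the sole gap that when $r \in \{4,5,8\}$ one cannot force $|\beta_1'| = 2$. Since $2\cdot 4 + 1 = 9$ is not prime, the only primes for which the construction above leaves anything uncovered are $p = 11$ (the pairs $\{6,2\}$ and $\{6,9\}$) and $p = 17$ (the pairs $\{9,2\}$ and $\{9,15\}$).

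These finitely many residual pairs I would dispatch directly. Because it is enough to realize \emph{some} member of the scaling orbit of a missing pair, a single well-chosen rotational sequencing typically settles several at once: for example, feeding the Walecki permutation $(1,r,2,r-1,\ldots)$ through Lemma~\ref{lem:fgm} produces, for $p = 11$, a rotational sequencing whose adjacent pairs include both $\{2,8\}$ (in the orbit of $\{6,2\}$) and $\{4,6\}$ (in the orbit of $\{6,9\}$), and an analogous short computation disposes of $p = 17$. Assembling the three steps yields Alspach's Conjecture for $n = p$ prime and $k = p-3$.

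I expect the main difficulty to be the bookkeeping in the second step: writing down the cyclic difference sequence of the Lemma~\ref{lem:fgm} terrace without sign errors, and then determining \emph{exactly} which values $z$ are attainable as $\beta_1'$, since it is this precision that exposes the three exceptional pairs of Lemma~\ref{lem:diff1} and hence forces the separate verification for $p \in \{11,17\}$.
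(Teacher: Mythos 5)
Your proposal is correct and follows essentially the same route as the paper: reduce via Lemma~\ref{lem:xyodd} to making a pair adjacent in a rotational sequencing, use Lemma~\ref{lem:diff1} to prescribe the first absolute difference of a graceful permutation feeding Lemma~\ref{lem:fgm} (so that $\pm\beta_1'$ and $-r\equiv r+1$ become adjacent), and use multiplication by units to reduce an arbitrary admissible pair $\{x,y\}$ to $\{r+1,z\}$ with $z\notin\{0,r,r+1\}$. The one genuine divergence is your treatment of the residual pairs at $r\in\{5,8\}$, $z=\pm 2$, and here your version is actually the more robust one. The paper disposes of these by ``switching the roles of $x$ and $y$,'' i.e.\ rescaling so that $y$ rather than $x$ lands on $r+1$; the rescaled partner is then $(r+1)^2/(\pm 2)$. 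For $p=11$ this gives $\pm 4$ and the switch succeeds, but for $p=17$ one has $(r+1)^2=81\equiv -4\pmod{17}$, so the switched value is $\mp 2$ and the pair lands back in the exceptional set --- the orbit of $\{9,2\}$ under scaling contains no pair $\{9,z\}$ with $z\neq\pm2$, so the paper's switching argument does not actually close this case. Your alternative --- observe that it suffices to realize \emph{any} member of the scaling orbit, and check that an explicit Walecki-based sequencing does so --- works: for $p=17$ the terrace $(1,8,2,7,3,6,4,5,13,12,14,11,15,10,16,9)$ has difference sequence $(7,11,5,13,3,15,1,8,16,2,14,4,12,6,10,9)$, whose adjacent pair $\{7,11\}=14\cdot\{9,2\}$ lies in the unique residual orbit (note $\{9,15\}=13\cdot\{9,2\}$ is in the same orbit), and your $p=11$ computation checks out as stated. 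So the proposal is sound; I would only ask you to write out the $p=17$ verification explicitly rather than asserting that ``an analogous short computation'' handles it, since that finite check is precisely the point where the published argument is shakiest.
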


\begin{proof}  Let~$p=2r+1$.  We first consider ordering the elements of~$\Z_p \setminus \{ 0, d, r+1 \}$ where~$1 \leq d < r$ (when considered as integers).

If~$(d,r) \not\in \{ (2,5), (2,8) \}$, there is a graceful permutation of length~$r$ with first absolute difference~$d$, by Lemma~\ref{lem:diff1}.  Let $\boldsymbol{\alpha} = (\alpha_1, \alpha_2, \ldots, \alpha_r)$ be either this graceful permutation or its complement, whichever has~$\alpha_2 - \alpha_1 = d$.  Let~${\bf a}$ be the directed  rotational terrace constructed using~$\boldsymbol{\alpha}$ via Lemma~\ref{lem:fgm} and let ${\bf b} = (b_1, b_2, \ldots, b_{n-1})$ be the associated  rotational sequencing.

We have~$b_1 = d$ and~$b_{n-1} = -r = r+1$  .  Therefore~$d$ and~$r+1$ appear in adjacent positions of the rotational sequencing and Lemma~\ref{lem:xyodd} gives the required ordering of $\Z_p \setminus \{ 0, d, r+1 \}$.

Next, consider the case~$\Z_p \setminus \{ 0, d', r+1 \}$ where~$r+1 < d' \leq 2r$ (when considered as integers).  Let~$d = p-d'$.  Provided~$(d,r) \not\in \{ (2,5), (2,8) \}$, the rotational sequencing~${\bf b}$ has~$b_{n-2} = d'$ and so~$d'$ and~$r+1$ appear in adjacent positions of the rotational sequencing.  Again, Lemma~\ref{lem:xyodd} gives the required ordering of~$\Z_p \setminus \{ 0, d', r+1 \}$.

Finally, we show that the problem with arbitrary~$x$ and~$y$ removed from~$\Z_p \setminus \{ 0 \}$ can be reduced to one of the above two cases.  Assume that $x \neq \pm y$ (if~$x=-y$ then the sum of the elements in~$\Z_p \setminus \{ 0, x,y \}$ is 0).

Automorphisms of~$\Z_p$ are exactly the multiplications by a nonzero element.  Let~$\nu$ be the element such that~$x\nu = r+1$.  Then~$y\nu \not\in \{0, r,r+1\}$.   The ordering above has~$r+1$ and~$y\nu = \pm d$ missing.  Mutiplying by~$\nu^{-1}$ gives an ordering with~$x$ and~$y$ missing.  This does not cover the cases~$r \in \{5,8\}$ and~$y\nu =  \pm 2$.  In these two cases switching the roles of~$x$ and~$y$ (that is, choosing~$y\nu = r+1$ and~$d= \pm x\nu$) is successful.
\end{proof}

When~$k=p-4$, the approach of this section using rotational sequencings constructed via graceful permutations can certainly handle some instances of the problem.  However, it seems unlikely that a complete solution for~$k=p-4$ is in reach without additional tools.

\section{Concluding remarks}\label{sec:conclude}

Of course the reader may wonder about these conjectures and problem when $n$ is composite.  The polynomial method approach taken in Sections \ref{sec:CN} and \ref{sec:seqs} may be used once again but with limitations.
  
U.~Schauz \cite{S08} has shown that Theorem \ref{th:pm} holds when the field $\F$ is replaced by a ring $R$ so long as no two distinct elements of any $A_i$ differ by a zero-divisor; if so, $A_1 \times \ldots \times A_k$ has what Schauz refers to as {\it Condition D}.  (Of course, Condition D automatically holds for a field.)  

Let $A \subseteq {\mathbb Z}_n$ be a set such that no two elements differ by a zero-divisor and let $p_1$ be the smallest prime dividing $n$.  We claim that $A$ has at most $p_1$ elements and such sets exist.  For the existence, note that the set of integers $\{1,\ldots , p_1\}$ modulo $n$ has pairwise differences of $\{\pm 1, \ldots , \pm(p_1-1)\}$ and none of these is a zero-divisor.  To show that $A$ has at most $p_1$ elements, suppose to the contrary and let $A=\{a_1, \ldots , a_{p_1+1}\}$.  When considered modulo $p_1$, the pigeonhole principle implies that two of these integers belong to the same remainder class and so will differ by a non-zero multiple of $p_1$, which is a zero divisor in ${\mathbb Z}_n$.

Assume $A$ has no two distinct elements that differ by a zero-divisor so that $A^k$ has Condition D.  Thus, we may return to Table \ref{array:alspachcoefficients} and let ${\mathbb N}_{k,j}$ denote the set of non-negative integers greater than 1 that divide $c_{k,j}$.  The coefficient of the monomial $m_{k,j}$ over ${\mathbb Z}_n$ is not zero if and only if $n$ is not in this list.  As before, as $k \leq n-1$ the integers $n$ in this set that do not obey this inequality need not be considered.  This means that Conjecture \ref{conj:alspach} holds for that value of $k$ and all integers $n$ not in this abbreviated list, i.e. for $n > k$ and $n \nmid c_{k,j}$.  We may then turn to a different nonzero coefficient, say, $c_{k,i}$ for $i \neq j$ and repeat this argument. This means that if $\cap_{i=1}^k {\mathbb N}_{k,i}$ is empty or only contains integers less than $k$, then Conjecture \ref{conj:alspach} holds for that value of $k$ and all integers $n$.  For $k \leq 10$, the only instance in which $\cap_{i=1}^k {\mathbb N}_{k,i}$ is non-empty and contains integers greater than $k$ is when $k=8$: when $k=8$ the integer 12 divides $-366468, -92412, 144324$ and $314556$.  However, as shown above, the largest set in ${\mathbb Z}_{12}$ with no two distinct elements differing by a zero-divisor is 2.  So, we reach the conclusion that Conjecture \ref{conj:alspach} is true for sets $A$ with no two distinct elements differing by a zero divisor of size at most $10$ and all $n$.  

Similarly, the constructive methods of Section~\ref{sec:constr} give a partial result for composite~$n$.  The construction method in the proof of Theorem~\ref{th:oddprime} (in conjunction with the computational results for small groups of~\cite{CMPP18}) is sufficient to give an ordering for $S = \Z_n \setminus \{ 0,x,y \}$ for arbitrary~$n$ provided that at least one of~$x$ and~$y$ is coprime to~$n$.

\bibliography{AlspachConjecture-bib.bib}

\end{document}